\newcommand{\required}[1]{\section*{\hfil \sharsokol2013advanced\hfil}}
\newcommand{\supp}{\operatorname{supp}}
\newcommand{\ssupt}{\operatorname{sing\, supp}}
\newcommand{\Beq}{\begin{equation}}
\newcommand{\Eeq}{\end{equation}}
\newcommand{\be}{\begin{equation}}
\newcommand{\ee}{\end{equation}}
\newcommand{\bs}{\begin{split}}
\newcommand{\beq}{\begin{equation*}}
\newcommand{\eeq}{\end{equation*}}
\newcommand{\bal}{\begin{align}}
\newcommand{\eal}{\end{align}}
\newcommand{\s}{\mathcal S}
\newtheorem{theorem}{Theorem}
\numberwithin{theorem}{section}
\newtheorem{proposition}[theorem]{Proposition}
\newcommand{\WF}{\mathrm{WF}}
\newcommand{\CH}{\mathcal H}
\newcommand{\hH}{\mathsf H}
\newcommand{\br}{\mathbb{R}}
\newcommand{\N}{\mathbb{N}}
\newcommand{\ik}{\varphi}
\newcommand{\bt}{\beta}
\newcommand{\al}{\alpha}
\newcommand{\la}{\lambda}
\newcommand{\ioi}{\int_0^{\infty}}
\newcommand{\ft}{f_T}
\newcommand{\dd}{\text{d}}
\newcommand{\ga}{\gamma}  
\newcommand{\chx}{\check {\bold x}}  
\newcommand{\chy}{\check {\bold y}}  
\newcommand{\chxc}{\check x}  
\newcommand{\chyc}{\check y}
\newcommand{\Hc}{\mathcal{H}}
\newcommand{\Eb}{\mathbb{E}}
\newcommand{\Pb}{\mathbb{P}}
\newcommand{\Rb}{\mathbb{R}}
\newcommand{\Nc}{\mathcal{N}}
\newcommand{\Ab}{\vec{\alpha}}
\newcommand{\e}{\epsilon}
\numberwithin{equation}{section}
\newcommand{\bel}[1]{\begin{equation}\label{#1}}
\newcommand{\eel}[1]{{\label{#1}\end{equation}}}
\newcommand{\paren}[1]{\left(#1\right)}               
\renewcommand{\bar}[1]{\overline{#1}}
\newcommand{\vx}{\mathbf{x}}
\newcommand{\vy}{\mathbf{y}}
\newcommand{\cvx}{\check\vx}
\newcommand{\cvy}{\check\vy}
\newcommand{\Nrec}{N^{\text{rec}}}
\newtheorem{lemma}[theorem]{Lemma}
\newtheorem{definition}[theorem]{Definition}
\newtheorem{assumption}[theorem]{Assumption}
\theoremstyle{definition}
\newtheorem{remark}{Remark}
\title{Statistical microlocal analysis in two-dimensional X-ray CT}
\author[$\dagger$]{Anuj Abhishek} 
\author[$*$]{Alexander Katsevich}
\author[$+$]{James W. Webber}
\affil[$\dagger$]{\small Department of Mathematics, Applied Mathematics and Statistics, Case Western Reserve University, USA\par axa1828@case.edu}
\affil[$*$]{\small Department of Mathematics, University of Central Florida, USA\par alexander.katsevich@ucf.edu}
\affil[$+$]{\small Department of Biomedical Engineering, Cleveland Clinic, USA\par webberj5@ccf.org }
\date{}
\begin{document}
\maketitle

\begin{abstract} 
In many imaging applications it is important to assess how well the edges of the original object, $f$, are resolved in an image, $f^\text{rec}$, reconstructed from the measured data, $g$. In this paper we consider the case of image reconstruction in 2D X-ray Computed Tomography (CT). Let $f$ be a function describing the object being scanned, and $g=Rf + \eta$ be the Radon transform data in $\br^2$ corrupted by noise, $\eta$, and sampled with step size $\sim\e$. Conventional microlocal analysis provides conditions for edge detectability based on the scanner geometry in the case of continuous, noiseless data (when $\eta = 0$), but does not account for noise and finite sampling step size. 
We develop a novel technique called \emph{Statistical Microlocal Analysis} (SMA), which uses a statistical hypothesis testing framework to determine if an image edge (singularity) of $f$ is detectable from $f^\text{rec}$, and we quantify edge detectability using the statistical power of the test. Our approach is based on the theory we developed in \cite{AKW2024_1}, which provides a characterization of $f^\text{rec}$ in local $O(\e)$-size neighborhoods when $\eta \neq 0$. We derive a statistical test for the presence and direction of an edge microlocally given the magnitude of $\eta$ and data sampling step size. Using the properties of the null distribution of the test, we quantify the uncertainty of the edge magnitude and direction. We validate our theory using simulations, which show strong agreement between our predictions and experimental observations. Our work is not only of practical value, but of theoretical value as well. SMA is a natural extension of classical microlocal analysis theory which accounts for practical measurement imperfections, such as noise and finite step size, at the highest possible resolution compatible with the data. 
\end{abstract}

\section{Introduction}
In this paper, we introduce a new technique which quantifies the presence, direction and magnitude of an image edge when the image is reconstructed from finitely sampled and noisy tomographic data. We call this new technique ``Statistical Microlocal Analysis" (SMA). 

In recent work, the authors introduced Local Reconstruction Analysis (LRA), which studies images reconstructed from generalized Radon transform data in local neighborhoods. 
In \cite{Katsevich2017a,Katsevich2021a,Katsevich_2025_BV} (see references therein), we analyzed the resolution with which the singularities of $f$ can be reconstructed from discrete tomographic data in a deterministic setting, i.e., in the absence of noise. Later, in \cite{AKW2024_1, Katsevich_2025_BV}, this work was extended to address noise. We aim to combine this theory with statistical hypothesis testing to derive the foundations for SMA.

We now review some of the classical literature on microlocal analysis of generalized Radon transforms after establishing some standard notation that we will use for the review and throughout the paper. Let $g = \mathcal{R}f$ denote generalized Radon transform data in $\br^n$, where the transform, $\mathcal{R}$,  integrates $f$ over a family of $d < n$ dimensional (hyper)surfaces in $\br^n$. For simplicity, in this overview of existing literature we assume that $f$ is a conormal distribution. We assume that the \textit{singular support} of $f$, denoted $\ssupt(f)$, is a smooth, codimension one submanifold $\s \subset \mathbb{R}^n$. Consequently, $WF(f)\subset N^*\s$, where $WF(f)$ denotes the \textit{wavefront set} of $f$ and $N^*\s$ is the conormal bundle of $\s$. 
We refer to an element $(\vx,\xi)\in WF(f)$ as an ``edge" of $f$, where $\vx$ is the edge location and $\xi$ the direction.


Microlocal analysis techniques have been employed extensively in the imaging literature \cite{krishnan2015microlocal, WHQ, felea2013microlocal, webber2021microlocal, krishnan2012microlocal, grathwohl2020imaging, SU:SAR2013, Caday:SAR, ABKQ2013, Nguyen-Pham, Q1993sing}, and applied to a number of imaging fields, such as X-ray CT, Synthetic Aperture Radar (SAR), Compton scatter tomography, ultrasound and seismic imaging. In \cite{krishnan2015microlocal} (see references therein), a microlocal analysis of the classical hyperplane Radon transform, $R$, which has applications in X-ray CT, is overviewed. The authors state conditions for when an edge in $f$ is detectable in the reconstruction based on $g = Rf$ (the continuous, noiseless data).  
Specifically, let $(\vx, \xi) \in N^* \s$ be an edge to be reconstructed. If the data contains integrals over planes in a neighborhood of $L$, where $L$ is the tangent plane to $\s$ at $\vx$, then the edge is detectable (or ``visible" in the terminology of \cite{krishnan2015microlocal}). Otherwise, the edge is undetectable (or ``invisible"). Fourier Integral Operators (FIO) are one of the key tools in microlocal analysis for analyzing edge detection in image reconstruction. In \cite{krishnan2015microlocal}, the authors show that $R$ is an elliptic FIO which satisfies the Bolker condition \cite{GS1977, quinto}. This means that $\WF(R^*Rf) \subset \WF(f)$, i.e., the detected singularities are uniquely encoded in $g=Rf$.  The canonical relation relation of $R$ \cite[Definition 7]{krishnan2015microlocal} describes precisely how the singularities of $f$ propagate to $g$ and vice-versa. The authors go on to apply this theory to limited angle and exterior X-ray CT and provide simulated reconstructions to validate their results. Many other applications of microlocal analysis have been developed, too many to mention all of them here.


Assuming the Bolker condition is satisfied, if the edge is also visible, then one can often derive an algorithm to recover the edge which is continuous between two Sobolev spaces (a stable inverse). If the edge is invisible, then such an algorithm cannot exist.



The references listed above provide rigorous analyses of edge detectability in the case of continuous, noiseless integral transform data. To apply the theory in practice, we assume that the measured data is a reasonably accurate approximation to $\mathcal{R}f$. In cases, e.g., when the data is limited to continuous regions of sinogram space, such as limited angle tomography, this theory provides precise characterization of the image artifacts. 

While such conclusions are valuable, the theory is only of limited practical use. The data are always discrete and noisy, but microlocal analysis does not yet address how such measurement errors affect edge detectability, which is one of the central questions in applications.

{\it The main contribution of this work is to extend classical microlocal analysis ideas to fully characterize how two of the main sources of error encountered in practice, namely noise and finite data sampling,  affect detection of singularities in the reconstructed image.} Moreover, our analysis applies at image scales of most interest in applications, namely at the scale of highest resolution consistent with the sampling step size and noise strength. If the data step size is $\sim\e$, for some $\e > 0$, and the noise is not too high magnitude, then our local reconstruction analysis is effective at the scale $\sim\e$ as well. This is the first ever such an extension. 

Our results are obtained using SMA, which employs statistical hypothesis testing. We claim that it is natural to use techniques from statistics because the data contains random noise, so the reconstructed image can be viewed as a random sample drawn from some statistical distribution. 

Let $f^{\text{rec}}_{\e,\eta}$ denote an image reconstructed from discrete, noisy classical Radon transform data in $\br^2$. The goal is to determine if a given $(\vx,\xi)\in T^*\mathbb{R}^2$ belongs to $\WF(f)$, and with what probability, using $f^{\text{rec}}_{\e,\eta}$. To this end, we define a random vector, $F$, which is calculated using appropriately weighted integrals of $f^{\text{rec}}_{\e,\eta}$ over a disk $D \subset \br^2$, $\text{diam}(D)=O(\e)$, centered at $\vx$. The theory of \cite{AKW2024_1} shows that the noise in $f^{\text{rec}}_{\e,\eta}$ is described locally by a continuous Gaussian Random Field (GRF). Furthermore, in \cite{Katsevich2021a} we derive the Discrete Transition Behavior (DTB) function, which describes how the edges of $f$ are smoothed in $f^{\text{rec}}_{\e,\eta}$ due to finite data sampling. By combining our GRF and DTB theories, we show that $F$ follows a bivariate Gaussian distribution and provide explicit expressions for its mean, $\mu$, and covariance, $\hat C$. These depend on the data sampling step size, $\e$, the noise level, $\sigma^2$, and the jump (edge) magnitude $\Delta f$. The null hypothesis of our statistical test is that there is no edge present. In this case, $\mu = (0,0)$, and we establish conditions based on $\e$ and the ratio $\sigma/|\Delta f|$ when to reject the null hypothesis (e.g., using a $95\%$ confidence interval). We also calculate explicitly the statistical power, $1-\beta$, of the test, which represents the probability that a true edge is detected correctly. 

Our approach establishes an important parallel with classical microlocal analysis theory. In works such as \cite{krishnan2015microlocal}, the detectability of an edge is based on data availability (i.e., which continuous regions of sinogram space are accessible), and the result is deterministic. In contrast, we provide a probability for edge detection microlocally based on $\Delta f$, $\e$, and $\sigma^2$. To adopt the terminology of \cite{krishnan2015microlocal} and much of the classical literature, we say {\it the singularity is ``visible" with probability $1-\beta$}. 

To continue the analogy with classical microlocal analysis, we view $F$ as an estimate of a conormal vector, which encodes the magnitude and direction of the edge. We show that when the noise is zero, one has $F=H$, where $|H|$ is proportional to $\Delta f$ (up to a known constant) and $H$ is normal to the edge. When the data are noisy, the estimate $F$ deviates from $H$. 

In addition to locating the edge, we also provide a {\it confidence region for the vector $H$} at any prescribed confidence level. It is an ellipse centered at $F$. We then use the probability density function (PDF) of $F$ derived from our theory to quantify how spread out the direction, $\Theta:=F/|F|\in S^1$, and magnitude of the edge, $|F|$, are from the true values, $\Theta_0:=H/|H|$ and $|H|$, respectively, when computed from discrete noisy data. 


To validate our theory, we conduct simulated experiments where we reconstruct the characteristic function of a ball from line integral data. Pseudorandom noise is added using draws from a uniform distribution. The observed distribution for $F$, which we estimate using histograms, accurately matches with our predictions, and $\beta$ is estimated accurately. We plot confidence bands for $F$ and quantify the spread for the edge magnitude and direction. In addition, we include several visualizations to help illustrate our method. For example, we plot polar graphs of the edge direction likelihood, to help the reader visualize the directions in which the edge is most likely to occur. 

Our results are also of significant practical importance. 
They provide practitioners with a tool to assess how well and with what probability the edges are resolved in the reconstructed image based on the noise level, data sampling rate, and edge magnitude (or, jump size). {Our theory is sensitive to the spatial variability of noise in the data. For example, if there is higher noise variance corresponding to rays which intersect a given domain, $D$, of the image being assessed, then this affects edge resolvability in $D$.} In medical applications, this allows for quantification of how likely it is to detect (or miss) a diagnostically valuable feature {at a particular location} in an image (e.g., a small tumor in $D$). Likewise, in nondestructive testing, our method informs the scanner operator of how likely it is to identify a feature that is critical for the structural integrity of a scanned part. Numerous other examples are possible.

The work in this paper is closely related to the problem of edge detection, which arises in image analysis and statistics. Edge detection in imaging applications has a rich history, \cite{canny86}. In particular, in statistical image reconstruction, boundary detection is often formulated as a spatial change point problem, where points on the boundary (edges) are regarded as change points. In \cite{Tsyb_92_cp,minimaxIR_book} the authors provide an analysis of multidimensional change-point problems and boundary estimation and prove global minimax-optimality for detecting such spatial discontinuities under the assumption of Gaussian noise. In \cite{hou03,hoon06} edge detection algorithms are proposed, which are robust to noise. Most of these works study direct image edge estimation problems. In this setting, the image edge is an ideal, sharp edge, and noise in the image is uncorrelated. 

In \cite{tsyb_06,tsyb_08} change-point estimation from indirect observations in $\br$ was considered for convolu\-tion-type indirect data. However, the model and the results in these works are very different from ours. There, the noise is the standard two-sided Wiener process on $\br$ with magnitude $O(\e)$, $\e\to0$, while the nonrandom part of the signal (the edge) is convolved with a {\it fixed} kernel (independent of $\e$). Hence, in principle, it is not difficult to separate high frequency noise from low frequency nonrandom signal (especially as $\e\to0$). Our work differs from the above in that (1) both the support of the convolution kernel and the noise magnitude depend on $\e$, and (2) we work at the native length scale $\sim\e$. This makes our task very different, because at this scale {\it noise in the reconstruction is a smooth random function} and it is harder to distinguish it from an equally smooth nonrandom signal. Taking the limit as $\e \to0$ does not help to separate the two, because they both remain smooth in our scaling regime. Finally, we work in $\br^2$ and estimate both the magnitude and the direction of the edge.

A recent, very interesting method to find object boundaries and quantify their roughness from indirect measurements using Bayesian methods is in \cite{afkham_24_uq}. However, the goals of the study and method of analysis are very different from ours. 

We emphasize that SMA and the results presented here are the first steps towards new theory, and there remain many open questions and interesting avenues to explore.  

The remainder of the paper is organized as follows. In Section \ref{sec:setting_mainres}, we establish our notation and assumptions (e.g., on noise) and review key results from previous work which will be needed to prove our theorems. In sections  \ref{SMA} and \ref{sec:anal2D}, we introduce SMA and prove our main theoretical results. In Section~\ref{SMA}, a simpler 1D setting is considered. The assumption here is that the direction normal to the edge is known. This case serves primarily as an illustration of our main ideas. The most practically relevant 2D case is considered in Section~\ref{sec:anal2D}, where the goal is to determine whether $\vx\in\s$ and, if so, estimate the magnitude of the jump and the normal direction to $\s$ at $\vx$. Here, we develop a pointwise statistical test for the presence of the edge and provide expressions for the power, $1-\beta$. We present simulated experiments in Section \ref{experiments} to validate our theory. The 1D case is illustrated in Section~\ref{ssec:1dexp}, and the 2D case - in Section~\ref{ssec:2dexp}. In sections~\ref{ssec:edm}--\ref{ssec:macro} we investigate additional applications of SMA in 2D, such as describing the uncertainty in the edge direction and magnitude. We also apply our 2D test in a scanning regime, where we apply the pointwise test, designed for local domains, across a full 2D image. This is only for illustration purposes since we have not yet expanded the statistical guarantees of our pointwise test to the full image domain. To derive such results, one would have to account for correlation between neighboring image patches, and this task is beyond the scope of the paper. Finally, an auxiliary proposition is stated and proven in the appendix.

\section{Setting of the problem.}\label{sec:setting_mainres}
We consider the problem of reconstructing a function $f(\vx)$, $\vx\in \Rb^2$, from discretely sampled noisy Radon Transform  (RT) data. We use the following parameters to  discretize the observation (or, data) space, $S^1\times [-P,P]$:
\be\label{params}
\al_k=k\Delta\al\in[-\pi,\pi),\ p_j=\bar p+j\Delta p\in[-P,P],\ \Delta p=\e,\ \Delta\al/\Delta p=\kappa,
\ee 
where $\kappa>0$ and $\bar p$ are fixed. Here $p$ is the affine parameter, and $\al\in[-\pi,\pi)$ corresponds to $\vec\al=(\cos\al,\sin\al)\in S^1$. We loosely refer to $\epsilon$ as the data step size. The discrete noisy tomographic data are modeled as:
\be\label{disc_model}
\hat{f}_{\e,\eta}(\al_k,p_j)= Rf(\al_k,p_j)+ \eta_{k,j},
\ee
where $Rf(\al_k,p_j)$ is the Radon transform of $f$ at the grid point $(\al_k,p_j)$, and $\eta_{k,j}:=\eta(\al_k,p_j)$ are random variables that model noise in the observed data. We assume $\eta_{k,j}$ are zero mean, independent but not necessarily identically distributed. 

We now state our main assumptions on $f$ and the measurement noise. Given a set $D\subset\br^2$, let $\chi_D$ denote the characteristic function of $D$.
\begin{assumption} {(Assumptions on $f$)}\label{ass:f}
\begin{enumerate}
\item $\supp f\subset\{\vx\in\br^2:\ |\vx|<P\}$.
\item\label{pwsm} $f(\vx)=\sum_{k=1}^K f_k(\vx)\chi_{D_k}(\vx)$ for some $K\in\N$, $f_k\in C^\infty(\br^2)$ and some open sets $D_k\subset\br^2$ with piecewise smooth boundary.
\end{enumerate}
\end{assumption}
By assumption~\ref{ass:f}\eqref{pwsm}, we limit our discussion to functions $f$ that, at worst, have jump type singularities. 


For convenience, throughout the paper, we use the following convention: if a constant $c$ is used in an equation or inequality, the qualifier ‘for some $c>0$’ is assumed. If several $c$’s are used in a string of (in)equalities, then ‘for some’ applies to each of them, and the values of different $c$’s may all be different. For example, in the string of inequalities $f\le cg \le ch$, the values of $c>0$ in two places may be different.

Let $\Eb(X)$ denote the expected value of a random variable $X$. 

\begin{assumption} {(Assumptions on noise)}\label{noi}
\begin{enumerate}
\item $\Eb(\eta_{k,j})=0$.
\item\label{sig_2} $\Eb\eta_{k,j}^2=\sigma^2(\al_k,p_j)\Delta\al$ for some even $\sigma\in C^1([-\pi,\pi]\times[-P,P])$, i.e. $\sigma(\al,p)=\sigma(\al+\pi,-p)$.
\item $\Eb \lvert \eta_{k,j}\rvert^3\leq c(\Delta \alpha)^{3/2}$.
\end{enumerate}
\end{assumption}

See a discussion at the end of this section about how these conditions can be relaxed.
For reconstruction, we use an interpolation kernel, $\ik$, which satisfies the following assumptions.
\begin{assumption}{(Assumptions on the kernel $\ik$)}\label{interp}
\begin{enumerate}
\item $\ik$ is compactly supported.
\item $\ik^{(M+1)}\in L^\infty(\br)$ for some $M\ge 3$.
\item $\ik$ is exact up to order 1, i.e. $\sum_k \ik(t-k) k^j\equiv t^j$, $j=0,1$. 
\end{enumerate}
\end{assumption}
Note that the last assumption implies $\int \ik(t)dt=1$.
Denoting the Hilbert transform by $\Hc$, the reconstruction formula from the data \eqref{disc_model} is given by:
\begin{align}\label{recon-orig}
f^{\text{rec}}_{\e,\eta}(\vx)&=-\frac{\Delta\al}{4\pi\e}\sideset{}{_{|\al_k|\le \pi}}\sum \sideset{}{_j}\sum\CH \ik^{\prime}\left(\frac{\vec\al_k\cdot \vx-p_j}\e\right)\hat f_{\e,\eta}(\al_k,p_j)= f^{\text{rec}}_{\e}(\vx)+N^{\text{rec}}_{\e}(\vx),
\end{align}
where 
\be\label{fN recons}\begin{split}
   f^{\text{rec}}_{\e}(\vx)&  :=-\frac{\Delta\al}{4\pi\e}\sideset{}{_{|\al_k|\le \pi}}\sum \sideset{}{_j}\sum\CH \ik^{\prime}\left(\frac{\vec\al_k\cdot \vx-p_j}\e\right) Rf(\al_k,p_j),\\
   N^{\text{rec}}_{\e}(\vx)&:=-\frac{\Delta\al}{4\pi\e}\sideset{}{_{|\al_k|\le \pi}}\sum \sideset{}{_j}\sum\CH \ik^{\prime}\left(\frac{\vec\al_k\cdot \vx-p_j}\e\right)\eta_{k,j}.
\end{split}
\ee 
Since $\ik$ is compactly supported, we see from \eqref{fN recons} that the resolution of the reconstruction is, roughly, of order $\sim\e$, i.e. of the same order as the data step size. 

Pick a point $\vx_0\in \s:=\ssupt f$. Let $\vec\Theta_0$ be a unit vector normal to $\s$ at $x_0$. Now we state a key technical assumption on $\vx_0$ that is needed to use our previous results. Let $\langle r\rangle$ denote the distance from a real number $r\in\br$ to the integers, $\langle r\rangle:=\text{dist}(r,\mathbb Z)$. The following definition is in \cite[p. 121]{KN_06} (after a slight modification in the spirit of \cite[p. 172]{Naito2004}).

\begin{definition} Let $\nu>0$. The irrational number $s$ is said to be of type $\nu$ if $\nu$ is the infimum of all $\nu_1$ for which there exists $c(s,\nu_1)>0$ such that
\be\label{type ineq}
m^{\nu_1}\langle ms\rangle \ge c(s,\nu_1) \text{ for any } m\in\mathbb N.
\ee
\end{definition}
See also \cite{Naito2004}, where the numbers which satisfy \eqref{type ineq} are called $(\nu-1)$-order Roth numbers. It is known that $\nu\ge1$ for any irrational $s$.

\begin{assumption}{(Assumptions on $\vx_0$)}\label{ass:x0}
\begin{enumerate}
\item\label{x0_1} The quantity $\kappa|\vx_0|$ is irrational and of some finite type $\nu$.
\item\label{x0_2} $\sigma^2(\al,\vec\al\cdot \vx_0)\not=0$ for all $\al$ in some open set $\Omega\subset[0,2\pi]$.
\item\label{x0_3} $|\vx_0|<P$.
\item\label{x0_4} The curvature of $\s$ at $\vx_0$ is not zero.
\item\label{x0_5} The quantity $\kappa \vx_0\cdot\vec\Theta_0^\perp$ is irrational, where $\vec\Theta_0\in S^1$ is normal to $\s$ at $\vx_0$.
\item\label{x0_6} The line $\{\vx\in\br^2:(\vx-\vx_0)\cdot\vec\Theta_0=0\}$ is not tangent to $\s$ at any point where the curvature of $\s$ is zero.
\end{enumerate}
\end{assumption}

The asymptotic behaviour of $f^{\text{rec}}_{\e}(\vx)$ in a small neighborhood of $\vx_0$ as $\e \to 0$, is well understood from the theory of local reconstruction analysis (LRA), see e.g. \cite{Katsevich2017a, kat19a, Katsevich2020a, Katsevich2020b, Katsevich2021a}. Let 
\be
\Delta f(\vx_0):=\lim_{t\to0^+} \big(f(\vx_0+t\vec\Theta_0)-f(\vx_0-t\vec\Theta_0)\big)
\ee
be the value of the jump of $f$ at $\vx_0$. Let $D\subset\br^2$ be a bounded domain. It is proven in \cite{Katsevich2017a, Katsevich2021a} that
\be\label{DTB appr}
f^{\text{rec}}_{\e}(\vx_0+\e\cvx)=c(\vx_0,\e)+\Delta f(\vx_0)\ft(\vec\Theta_0\cdot\cvx)+O(\e),\ \cvx\in D,\ \e\to0.
\ee
Here $\cvx$ is a point in the rescaled and shifted coordinates, $\cvx:=(\vx-\vx_0)/\e$, and
\begin{equation}\label{fr}
\begin{split}
\ft(t) = \int_{-\infty}^t \ik(s)\dd s-\frac12
\end{split}
\end{equation}
is the DTB function. Since $\int\ik(t)\dd t=1$, we have $\lim_{t\to\pm\infty}f(t)=\pm1/2$. Thus, $f_T$ is a smoothed edge of unit magnitude centered at zero. 

Next we cite the main result of \cite{AKW2024_1}, Theorem 2.10, which describes the behavior of $N_\e^{\text{rec}}$ as $\e\to0$. Given two real-valued functions $f$ and $g$, their cross-correlation is defined as follows:
\be\label{cross-cor}
(f\star g)(t):=\int_\br f(t+s)g(s)\dd s
\ee
as long as the integral is well-defined.

\begin{theorem}\label{GRF_thm}
Let $D$ be a rectangle. Suppose assumptions~\ref{noi}, \ref{interp}, and \ref{ass:x0} are satisfied and $M>\max(\nu+1,3)$. Then, $N_{\e}^{\text{rec}}(\vx_0+\e \chx)\to N^{\text{rec}}(\chx)$, $\chx\in D$, $\e\to0$, as GRFs in the sense of weak convergence. Furthermore, $N^{\text{rec}}(\chx)$ is a GRF with zero mean and covariance $\text{Cov}(\chx,\chy)=C(\chx-\chy)$, where
\be\label{Cov main}
C(\chx)=\bigg(\frac{\kappa}{4\pi}\bigg)^2\int_{0}^{2\pi}\sigma^{2}(\alpha,\Ab\cdot \vx_0) (\varphi^{\prime}\star \varphi^{\prime})(\Ab\cdot \chx)\dd\alpha,
\ee
and the sample paths of $N^{\text{rec}}(\chx)$ are continuous with probability $1$.
\end{theorem}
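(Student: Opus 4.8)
The plan is to prove weak convergence of the random fields by the standard two-step programme---convergence of finite-dimensional distributions (fidis) together with tightness---and to establish sample-path continuity of the limit separately. The starting observation is that, for each fixed $\chx$, the rescaled field is a sum of \emph{independent}, mean-zero random variables,
\[
N_\e^{\text{rec}}(\vx_0+\e\chx)=\sum_{k,j} w_{k,j}(\chx)\,\eta_{k,j},\qquad
w_{k,j}(\chx):=-\frac{\Delta\al}{4\pi\e}\,\CH\varphi'\!\Big(\tfrac{\vec\al_k\cdot\vx_0-\bar p}{\e}+\vec\al_k\cdot\chx-j\Big),
\]
so that limit theorems and moment bounds can be read off from the classical theory for triangular arrays.

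First I would compute the limiting covariance. By independence, $\mathrm{Cov}\big(N_\e^{\text{rec}}(\vx_0+\e\chx),N_\e^{\text{rec}}(\vx_0+\e\chy)\big)=\sum_{k,j} w_{k,j}(\chx)w_{k,j}(\chy)\,\sigma^2(\al_k,p_j)\Delta\al$. Since $\varphi$ is compactly supported, $\varphi'$ has zero mean, so $\CH\varphi'$ decays like $O(t^{-2})$ and the inner sum over $j$ (whose argument advances in unit steps) converges absolutely. I would evaluate it by Poisson summation: the zero-frequency term produces $(\CH\varphi'\star\CH\varphi')(\vec\al_k\cdot(\chx-\chy))$, and because the Hilbert transform multiplies the Fourier transform by a unimodular factor, this cross-correlation equals $(\varphi'\star\varphi')(\vec\al_k\cdot(\chx-\chy))$---precisely the mechanism producing the kernel in \eqref{Cov main}. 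With $\sigma^2(\al_k,p_j)$ localized near $p_j\approx\vec\al_k\cdot\vx_0$ by the kernel support, and using $\Delta\al=\kappa\e$ so that $(\Delta\al/(4\pi\e))^2=(\kappa/4\pi)^2$, the remaining factor $\Delta\al\sum_k\sigma^2(\al_k,\vec\al_k\cdot\vx_0)(\varphi'\star\varphi')(\cdots)$ is a Riemann sum converging to the integral in \eqref{Cov main}. Asymptotic normality of each fidi then follows from the Lyapunov central limit theorem: the third-moment bound $\Eb|\eta_{k,j}|^3\le c(\Delta\al)^{3/2}$ together with $|w_{k,j}|\le c$ gives a Lyapunov ratio of order $(\Delta\al)^{1/2}=O(\sqrt\e)\to0$, while the variance stays bounded away from $0$ by Assumption \ref{ass:x0}\eqref{x0_2} (as $(\varphi'\star\varphi')(0)>0$). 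Joint convergence of $\big(N_\e^{\text{rec}}(\vx_0+\e\chx_1),\dots,N_\e^{\text{rec}}(\vx_0+\e\chx_m)\big)$ follows by the Cram\'er--Wold device, applying the same CLT to arbitrary linear combinations, which are again sums of independent terms.

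Next I would establish tightness in $C(D)$. The increment $\Delta N_\e := N_\e^{\text{rec}}(\vx_0+\e\chx)-N_\e^{\text{rec}}(\vx_0+\e\chy)$ is once more a sum of independent mean-zero terms, with weights $w_{k,j}(\chx)-w_{k,j}(\chy)$; by the mean value theorem together with $\Delta\al/\e=\kappa$ and the boundedness and decay of $\CH\varphi''$ (guaranteed by $M\ge3$), these differences are $O(|\chx-\chy|)$, so summing their squares against the variances gives $\Eb(\Delta N_\e)^2\le c|\chx-\chy|^2$ uniformly in $\e$. A Rosenthal (or Marcinkiewicz--Zygmund) inequality at exponent $3$ then upgrades this to $\Eb|\Delta N_\e|^3\le c|\chx-\chy|^3$: the term $(\Eb(\Delta N_\e)^2)^{3/2}$ is $O(|\chx-\chy|^3)$, while the contribution of the individual third moments, controlled again by $\Eb|\eta_{k,j}|^3\le c(\Delta\al)^{3/2}$, is smaller by a factor $O(\sqrt\e)$. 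With exponent $3>\dim D=2$, the Kolmogorov--Chentsov criterion yields tightness, promoting fidi convergence to weak convergence of the fields. Finally, the limit covariance $C$ is smooth (an integral of the smooth, even function $\varphi'\star\varphi'$) with $\nabla C(0)=0$, whence $\Eb|N^{\text{rec}}(\chx)-N^{\text{rec}}(\chy)|^2=2\big(C(0)-C(\chx-\chy)\big)\le c|\chx-\chy|^2$; since the limit is Gaussian its higher increment moments are powers of this, and Kolmogorov's theorem furnishes a continuous modification, giving a.s. continuous sample paths.

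The main obstacle is the rigorous control of the \emph{oscillatory remainder} in the Poisson summation over $j$. The nonzero Fourier modes carry phases $e^{2\pi i n A_k}$ with $A_k=(\vec\al_k\cdot\vx_0-\bar p)/\e$, and after summation over $k$ one must show these contributions vanish as $\e\to0$. This is exactly where Assumption \ref{ass:x0}\eqref{x0_1}---that $\kappa|\vx_0|$ is irrational of finite type $\nu$---and the hypothesis $M>\max(\nu+1,3)$ enter: the Diophantine condition supplies the equidistribution and exponential-sum cancellation for $\sum_k e^{2\pi i n A_k}$, while the decay of $\widehat{\varphi'}$ afforded by $M$ derivatives suppresses the high modes $n$ fast enough to be summed against the (possibly large) Diophantine constants. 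Extra care is needed near the stationary directions $\vec\al\parallel\vx_0$, where $\al\mapsto\vec\al\cdot\vx_0$ has vanishing derivative and the exponential sums must be treated by van der Corput--type bounds. This estimate is the technical heart of the argument; by comparison the CLT and tightness steps are routine given the moment assumptions.
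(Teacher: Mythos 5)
First, a point of comparison you could not have known: the paper does not prove this statement at all. Theorem~\ref{GRF_thm} is imported verbatim --- the text introduces it with ``we cite the main result of \cite{AKW2024_1}, Theorem 2.10'' --- so there is no in-paper proof to measure your attempt against; the relevant proof lives in the companion paper. That said, your architecture is the standard one for results of this type and is consistent with how the cited work proceeds: finite-dimensional convergence by the Lyapunov CLT plus Cram\'er--Wold for a triangular array of independent summands; the covariance identified by extracting the zero Poisson mode, using that the Hilbert transform is a unimodular Fourier multiplier so $(\CH\varphi')\star(\CH\varphi')=\varphi'\star\varphi'$, followed by a Riemann sum in $\al$ with $\Delta\al/\e=\kappa$ producing the prefactor $(\kappa/4\pi)^2$; tightness in $C(D)$ from a Rosenthal bound giving $\Eb|\Delta N_\e|^3\le c|\chx-\chy|^3$ with exponent $3>\dim D=2$; and continuity of the Gaussian limit from the smoothness of $C$. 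The individual moment computations you sketch ($|w_{k,j}|\le c$, Lyapunov ratio $O(\sqrt{\Delta\al})$, Lipschitz control of the weights via $\CH\varphi''$, which $M\ge 3$ licenses) all check out.

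The genuine gap is the one you flag and then defer: the oscillatory remainder. This is not a peripheral estimate --- without it you do not know the fidi variances converge to anything, so the CLT target itself, not merely the covariance formula, is unidentified. Concretely, you must show that $\Delta\al\sum_k c_n(\al_k)\,e^{2\pi i n A_k}\to 0$ for the nonzero modes, where $A_k=(|\vx_0|\cos(\al_k-\theta_0)-\bar p)/\e$ is a \emph{nonlinear} phase, not an irrational rotation; the proof requires quantitative discrepancy bounds for $\{A_k\}$ of Erd\H{o}s--Tur\'an/Koksma type, with the type-$\nu$ condition of Assumption~\ref{ass:x0}\eqref{x0_1} controlling small-denominator sums such as $\sum_m 1/\big(m\langle m\kappa|\vx_0|\rangle\big)$, a van der Corput treatment near the stationary directions where the phase derivative $\kappa|\vx_0|\sin(\al_k-\theta_0)$ degenerates, and the $O(|n|^{-(M+1)})$ Fourier decay afforded by $M$ derivatives of $\varphi$ to sum against the Diophantine constants --- this is exactly where $M>\max(\nu+1,3)$ is consumed. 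As written, your argument is a correct roadmap with the load-bearing lemma missing. A smaller slip: you say $\sigma^2(\al_k,p_j)$ is ``localized \dots\ by the kernel support,'' but $\CH\varphi'$ is not compactly supported (only $\varphi$ is); the replacement $\sigma^2(\al_k,p_j)\to\sigma^2(\al_k,\vec\al_k\cdot\vx_0)$ must instead be justified by the $O(t^{-2})$ decay of $\CH\varphi'$ together with $\sigma\in C^1$ --- easy, but it should be said, since elsewhere your own argument leans on exactly that decay.
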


It follows from assumption~\ref{noi}\eqref{sig_2} that $C$ is even: $C(-\chx)=C(\chx)$, $\chx\in\br^2$.
Assumptions~\ref{ass:x0}(\ref{x0_1}--\ref{x0_3}) are needed for Theorem~\ref{GRF_thm} to hold, and assumptions~\ref{ass:x0}(\ref{x0_3}--\ref{x0_6}) -- for \eqref{DTB appr} to hold.

In the rest of the paper we assume $\e\ll 1$ is sufficiently small and the reconstruction from nosiy data, $\hat f_{\e,\eta}$, can be accurately approximated using \eqref{DTB appr} and Theorem~\ref{GRF_thm}:
\be\label{complete appr}
f_{\e,\eta}^{\text{rec}}(\vx_0+\e\cvx)\approx c(\vx_0,\e)+\Delta f(\vx_0)\ft(\vec\Theta_0\cdot\cvx)+N^{\text{rec}}(\chx),\ \cvx\in D,\ \e\ll1.
\ee
Due to the linearity of the Radon transform inversion \eqref{recon-orig}, we will assume without loss of generality that $\Delta f(\vx_0)=1$ in what follows. In this case, $\sigma$ represents the noise standard deviation relative to the magnitude of the jump, i.e. the relative noise strength. 

Assumption~\ref{noi} is not restrictive and can be significantly relaxed \cite{AKW2024_1}. Suppose the second and third moments of $\eta_{k,j}$ satisfy
\be\label{flex moments}
\Eb\eta_{k,j}^2=\sigma^2(\al_k,p_j)\Delta\al\vartheta^2(\e),\ 
\Eb \lvert \eta_{k,j}\rvert^3\leq c(\Delta \alpha\vartheta^2(\e))^{3/2},
\ee
where $\vartheta$ is any nonzero function. Theorem~\ref{GRF_thm} still holds provided it is applied to $N_\e^{\text{rec}}/\vartheta(\e)$ and $N^{\text{rec}}/\vartheta(\e)$ \cite{AKW2024_1}.

We present assumption~\ref{noi} as above because in this case the level of noise in the data is such that $N^{\text{rec}}$ (the noise component in $f_{\e,\eta}^{\text{rec}}$) is comparable with the deterministic, useful part of the reconstruction, $\Delta f(\vx_0)\ft$ (cf. \eqref{complete appr}). 

Suppose now $\vartheta(\e)\not\sim 1$. If $\vartheta(\e)\to0$, then $N^{\text{rec}}\to0$, and the noise vanishes in the limit. This makes the task of edge detection trivial. If $\vartheta(\e)\to\infty$, then $N^{\text{rec}}\to\infty$, and the noise overwhelms the deterministic part of the reconstruction. This means that $\e$ is no longer {an appropriate} resolution scale. 

To estimate what the appropriate resolution should be, replace the data \eqref{params}, \eqref{disc_model} with
\be\label{params pr}
\begin{split}
&\al_k=k\Delta\al^{(1)},\ p_j=\bar p+j\Delta p^{(1)},\ \Delta p^{(1)}=N\Delta p=N\e,\ \Delta\al^{(1)}=N\Delta \al,\\
&\hat{f}_{\e,\eta}^{(1)}(\al_k,p_j)= \frac1{N^2} \sum_{-N/2\le m,n< N/2}\hat{f}_{\e,\eta}(\al_{k+m},p_{j+n}),\ k,j\in N\mathbb Z.
\end{split}
\ee
This is the common procedure of pixel binning, whereby $N\times N$ neighboring measurements are averaged into one. Clearly, this increases the native scale from $\e$ to $\e^\prime=N\e$ and reduces the standard deviation of noise  by a factor of $\sim N$. Here we use that $\sigma\in C^1$, and the standard deviations of the $\eta_{k,j}$ that are averaged into one random variable are approximately the same. 

The new pixels do not overlap, therefore the random errors in the new data are still uncorrelated. By choosing $N\gg1$ (as a function of $\e$) we can find $\e^\prime$ such that the new (averaged) noise $\eta_{k,j}^\prime$ satisfies Assumption~\ref{noi} with $\e^\prime$ instead of $\e$. Once this is achieved, the value of $\e^\prime$ is the appropriate resolution scale.

In the above, we tacitly assume that $\e^\prime\ll1$. If this is not the case, then noise in the data is too strong and formula-based reconstruction does not give reliable results. 

Note that the restriction on $\Eb \lvert \eta_{k,j}\rvert^3$ in \eqref{flex moments} is quite natural; it follows from the inequality $\Eb \lvert \eta_{k,j}\rvert^3\le c (\Eb\eta_{k,j}^2)^{3/2}$. The latter inequality holds for many random variables, which include uniform and Gaussian random variables and, more generally, those with logconcave density (see e.g. \cite[Theorem 5.22]{LV2007}, \cite[Lemma 24]{vemp24}).

\section{Statistical Microlocal Analysis}\label{SMA}
In this section we develop a statistical test to determine whether $f$ has a jump at a fixed point $\vx_0\in\s$ given an image $f_{\e,\eta}^{\text{rec}}$ reconstructed from noisy discrete Radon transform data $\hat{f}_{\e,\eta}$ in \eqref{disc_model}. 
We use the standard hypothesis testing framework, \cite[Section 8.2.1]{CB_book}, \cite[Section 7.1]{Hardle}, \cite[Section 14.4.4]{LeRo22}, \cite[Section 9.2]{Fieguth22}. In the following subsection, we briefly recall the ideas relevant to our analysis largely following these four references. 

\subsection{Primer on statistical hypothesis testing}\label{ssec:primer}
We consider a $d$-dimensional Gaussian random vector $F \sim \mathcal{N}(\mu, \hat{C})$. Suppose $\mu=0$ under the null hypothesis $\hH_0$, and $\mu = H \neq 0$ under the alternative $\hH_1$:
\begin{align}\label{hyp1}
   \hH_0: F \sim \mathcal{N}(0, \hat{C}); \quad \hH_1: F \sim \mathcal{N}(H, \hat{C}),H \neq 0.
\end{align}
We assume throughout that the covariance matrix $\hat{C}$ is known. 
Recall that the PDF of a multivariate normal distribution with mean $\mu$ and covariance matrix $\hat{C}$ is given by:
\begin{equation}\label{main pdf}
 f(F \mid \mu, \hat{C}) = \big[(2\pi)^d |\hat{C}|\big]^{-1/2} \exp\left( -\Vert F - \mu\Vert_{\hat{C}^{-1}}^2/2 \right),\ \Vert F\Vert_{\hat{C}^{-1}}:= \big(F^T \hat{C}^{-1} F\big)^{1/2}.
\end{equation}
Here $|\hat{C}|$ is the determinant of $\hat C$. 
According to the Likelihood Ratio Test (LRT) theory, given the observed vector $F$, the test statistic is computed based on the following ratio
\begin{equation}
    \Lambda(F) = \log \frac{\max_{H\in\br^2}f(F;H)}{f(F;0)}.
\end{equation}
A higher value of the log-likelihood ratio indicates that the observation is more likely to be generated by the alternative rather than the null hypothesis. The maximum in the numerator is attained when \( H=F \) and the corresponding value is $\big[(2\pi)^d |\hat{C}|\big]^{-1/2}$. Simplifying and multiplying by 2 gives the test statistic used in this work:
\be\label{Z primer}
Z:=2\Lambda(F) = \Vert F\Vert_{\hat{C}^{-1}}^2 .
\ee

It is well-known that under $\hH_0$ the test statistic follows a $\chi$-squared distribution with $d$ degrees of freedom: $Z \sim \chi^2_d$. The LRT of size $\al$ (i.e., with the probability of type I error equal to $\al$) for testing $\hH_0$ against $\hH_1$ rejects \( \hH_0 \) if $Z > c_{\alpha}$, where $c_{\alpha}$ solves 
\be\label{level}
P(Z> c_{\alpha} \mid \hH_0) = P(\chi_d^2 > c_{\alpha})=\alpha. 
\ee
If the cumulative distribution function (CDF) of the $\chi_d^2$ distribution is denoted $\Upsilon_0$, then $c_{\alpha}=\Upsilon_0^{-1}(1-\alpha)$. 

We are also interested in calculating the \textit{power} of the statistical test, i.e. the probability of correctly rejecting \( \hH_0 \) when \( \hH_1 \) holds with some fixed mean vector $H$. For a chosen tolerance level $\alpha$ (type I error), if the null distribution is $\Upsilon_0$ and the distribution of the alternative is $\Upsilon_1$, the probability of making a type II error $\beta$ is: 
\be
\beta=1-\Upsilon_1(\Upsilon_0^{-1}(1-\alpha)).
\ee
The power of the test is then $1-\beta$.

In Section \ref{experiments} below, we use the Receiver Operating Characteristic (ROC) curve to demonstrate the effectiveness of the proposed tests. The ROC curve plots the True Positive Rate (TPR) or statistical power ($1-\beta$) against the False Positive Rate (FPR) ($\alpha$).

In the curve in Figure~\ref{fig:roc}, the $x$-axis represents $\alpha$
and the $y$-axis represents $1-\beta$. Furthermore, each point on the curve corresponds to a different decision threshold. We note that increasing $\alpha$ increases $1-\beta$ as well (i.e. more true positives but more false positives).
On the other hand, reducing $\alpha$ increases $\beta$ (i.e. fewer false positives but more false negatives). Ideally, one would like to maximize  $1 - \beta $ while keeping $\alpha$ low. For illustration,
an example ROC curve is plotted in Figure~\ref{fig:roc}, where the diagonal line represents a random classifier. 

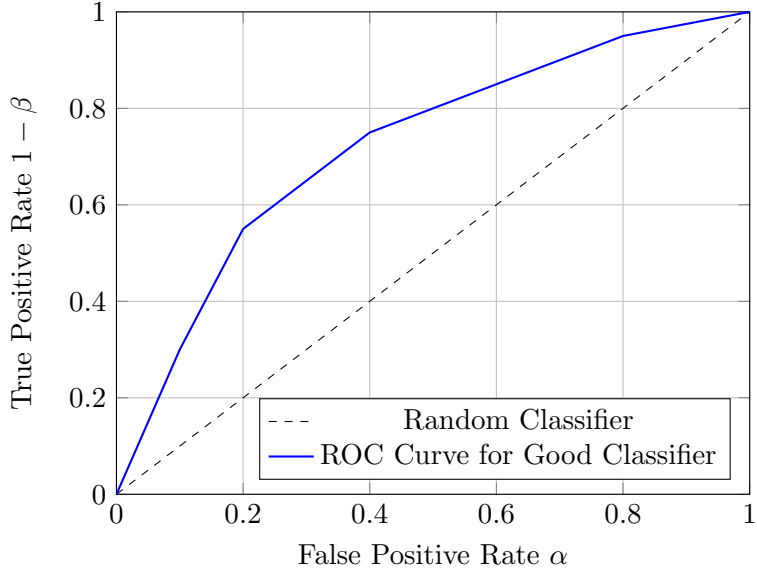
\begin{figure}
\begin{center}
\begin{tikzpicture}
    \begin{axis}[
        xlabel={False Positive Rate \( \alpha \)},
        ylabel={True Positive Rate \( 1 - \beta \)},
        xmin=0, xmax=1,
        ymin=0, ymax=1,
        grid=major,
        width=10cm, height=8cm,
        legend pos=south east
    ]
        \addplot[color=black, dashed] coordinates {(0,0) (1,1)};
        \addlegendentry{Random Classifier}

        \addplot[color=blue, thick] coordinates {(0,0) (0.1,0.3) (0.2,0.55) (0.4,0.75) (0.6,0.85) (0.8,0.95) (1,1)};
        \addlegendentry{ROC Curve for Good Classifier}
    \end{axis}
\end{tikzpicture}
\end{center}
\caption{Illustration of the ROC curve.}
\label{fig:roc}
\end{figure}

Lastly, \textit{Area Under the ROC Curve} (AUC) measures the classifier's overall performance. For a random classifier, AUC $=0.5$ (for the dashed line) and the closer the AUC is to $1$, the better the classifier is. 

\subsection{Edge detection in 1D}\label{first_prin}
In this section, we describe a statistical test for detecting edges along a fixed direction. 
We assume that if the edge is present, its direction $\vec\Theta_0$ is known. The goal of this section is to illustrate the main ideas of SMA in a simpler setting. 

We begin by considering weighted integrals of the reconstructed image over a line segment. 
Select an odd, compactly supported function $u\in L^\infty(\br)$ and consider the quantity
\begin{equation}\label{Fu}
\begin{split}
F_u &= \int_{\br} u(t)\ft(t)\dd t + \int_{\br} u(t)N^{\text{rec}}(t)\dd t
=: H_u + G_u,
\end{split}
\end{equation}
where $N^{\text{rec}}$ is the part of the reconstruction only from noise and $f_T$ is the deterministic part of $f_{\e,\eta}^{\text{rec}}$ (the DTB function), see \eqref{fr} and \eqref{complete appr}. Since $u$ is odd, the constant term in \eqref{complete appr} integrates to zero. With a slight abuse of notation, we denote $N^{\text{rec}}(t):=N^{\text{rec}}(t\vec\Theta_0)$.
Thus, $F_u$ represents a weighted integral of the reconstruction over the line segment $\{\vx_0 + \e t\vec\Theta_0 : t\in\supp u\}$ in physical coordinates.  

We assume $\supp u \subset [-\rho,\rho]=:I_\rho$, where $I_\rho$ is the edge detection window. Note that $\rho$ is in units of $\e$, so $\rho = 1$ implies $I_\rho$ has width $2\e$ in physical coordinates.

Our test statistic is based on the random variable $F_u$, so we examine the distribution function of $F_u$ in two cases: 
\be\label{two hypos}
\hH_0:\ \vx_0 \notin \s \text{ (no edge is present}),\quad  \hH_1:\ \vx_0 \in \s \text{ (edge is present}).
\ee
Suppose first $\vx_0\not\in\s$. Then $H_u=0$ and $F_u=G_u$. By Theorem~\ref{GRF_thm}, $N^{\text{rec}}(t)$, $t\in I_\rho$, is a continuous Gaussian random function with zero mean and covariance $C_1(t-s)$, where 
\be\label{C1 def}
C_1(t) = \paren{\frac{\kappa}{4\pi}}^2 \int_0^{2\pi}\sigma^2(\al,\vec\al\cdot\vx_0)(\varphi^{\prime}\star \varphi^{\prime})\big(\cos(\al-\theta_0)t\big)\dd \al.
\ee
It follows from Proposition \ref{prop2} that $G_u$ is a Gaussian random variable, $G_u\sim \mathcal{N}(0,\gamma^2)$,
where
\begin{equation}\label{ga sq}
\gamma^2 = \int_{\br}\int_{\br}u(t_1)u(t_2)C_1(t_1-t_2)\dd t_1\dd t_2.
\end{equation}
In a similar fashion, $F_u \sim \mathcal{N}(H_u,\gamma^2)$ if $\vx_0\in\s$. 

The above derivation shows that we can apply the LRT theory outlined in Section~\ref{ssec:primer} with $d=1$. The statistic to test $\hH_0$ against $\hH_1$ is given by $Z = (F_u/\gamma)^2$. As is well-known, the CDF of $\chi_1^2$ (i.e., of the $\chi_d^2$ distribution with $d=1$ degree of freedom) is 
\be
\Upsilon_0(x)=\text{erf}(\sqrt{x/2}),\ 
\text{erf}(x) = \frac{2}{\sqrt{\pi}} \int_0^x e^{-t^2} dt.
\ee
Set $c_\al=\Upsilon_0^{-1}(1-\alpha)$. Hence $\hH_0$ is rejected if $(F_u/\ga)^2>c_{\alpha}\iff|F_u|/\ga>\sqrt{c_\al}$. Using that $F_u/\ga \sim \mathcal{N}(H_u/\ga,1)$ under $\hH_1$, we immediately obtain the probability of type II error:
\be\label{beta 1d}
\beta = (1/2)\big[\text{erf}(|H_u|/\gamma+c_\al) -\text{erf}(|H_u|/\gamma-c_\al)\big].
\ee
Our arguments prove the following result.

\begin{theorem}[1D edge detection]\label{th3.1}
Let $F_u$ be the random vector given by eq. \eqref{Fu}. Set $Z = (F_u/\gamma)^2$. The LRT of size $\al$ for testing $\hH_0$ against $\hH_1$, which are defined in \eqref{two hypos} (no edge vs. edge), rejects $\hH_0$ if $Z > \Upsilon_0^{-1}(1-\alpha)$. The power of the LRT is $1-\bt$, where $\bt$ is given in \eqref{beta 1d}.
\end{theorem}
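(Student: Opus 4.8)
The plan is to reduce the statement to the one-dimensional ($d=1$) instance of the Likelihood Ratio framework recalled in Section~\ref{ssec:primer}, so the crux is to identify the law of $F_u$ under each hypothesis and check that the variance is the same in both. Writing $F_u=H_u+G_u$ as in \eqref{Fu}, the deterministic part $H_u=\int_\br u(t)\ft(t)\dd t$ vanishes under $\hH_0$, since $\vx_0\notin\s$ forces $\Delta f(\vx_0)=0$ and hence the DTB term in \eqref{complete appr} is absent, while under $\hH_1$ it is a fixed nonzero number. The random part $G_u=\int_\br u(t)N^{\text{rec}}(t)\dd t$ is a linear functional of the limiting GRF of Theorem~\ref{GRF_thm}; because $u\in L^\infty(\br)$ is compactly supported and the sample paths of $N^{\text{rec}}$ are a.s.\ continuous, the integral is well-defined pathwise. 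Proposition~\ref{prop2} then gives $G_u\sim\mathcal N(0,\gamma^2)$, with $\gamma^2$ as in \eqref{ga sq} obtained by Fubini from the covariance $C_1$ of \eqref{C1 def}. Consequently $F_u\sim\mathcal N(0,\gamma^2)$ under $\hH_0$ and $F_u\sim\mathcal N(H_u,\gamma^2)$ under $\hH_1$, the variance being common to both cases because the noise component is unaffected by the presence of the edge.

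With the law of $F_u$ in hand, testing \eqref{two hypos} is exactly the problem \eqref{hyp1} with $d=1$, $H=H_u$, and $\hat C=\gamma^2$. The LRT statistic \eqref{Z primer} becomes $Z=\Vert F_u\Vert_{\hat C^{-1}}^2=(F_u/\gamma)^2$, which under $\hH_0$ is $\chi_1^2$-distributed. By the size condition \eqref{level}, the test rejects $\hH_0$ precisely when $Z>c_\alpha:=\Upsilon_0^{-1}(1-\alpha)$, equivalently when $|F_u|/\gamma>\sqrt{c_\alpha}$, which is the first assertion of the theorem.

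For the power I would compute the type II error directly. Under $\hH_1$ the standardized variable $F_u/\gamma$ is $\mathcal N(H_u/\gamma,1)$, so $\beta$ equals the probability that this variable lands in the acceptance interval $[-\sqrt{c_\alpha},\sqrt{c_\alpha}]$. Evaluating the Gaussian integral over this symmetric interval and expressing the standard normal CDF through $\operatorname{erf}$ produces a difference of two error functions, namely \eqref{beta 1d}; the power is then $1-\beta$.

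The only genuinely nonroutine ingredient is the Gaussianity (and the variance evaluation) of the functional $G_u$, i.e.\ that integrating the continuous GRF $N^{\text{rec}}$ against $u$ yields an honest $\mathcal N(0,\gamma^2)$ variable rather than merely a random number with the correct first two moments. This is what Proposition~\ref{prop2} supplies: one approximates $G_u$ by Riemann sums, each a finite linear combination of jointly Gaussian values of $N^{\text{rec}}$ and hence Gaussian, and passes to the limit using a.s.\ continuity of the paths together with dominated convergence, Gaussianity being preserved under such $L^2$/a.s.\ limits. Granting that proposition, the remaining steps are bookkeeping within the framework of Section~\ref{ssec:primer}.
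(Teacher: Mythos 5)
Your proposal is correct and follows essentially the same route as the paper: decompose $F_u=H_u+G_u$ as in \eqref{Fu}, invoke Proposition~\ref{prop2} with the covariance $C_1$ of \eqref{C1 def} to conclude $F_u\sim\mathcal N(0,\gamma^2)$ under $\hH_0$ and $F_u\sim\mathcal N(H_u,\gamma^2)$ under $\hH_1$, then apply the $d=1$ LRT framework of Section~\ref{ssec:primer} and obtain $\beta$ from the law of $F_u/\gamma\sim\mathcal N(H_u/\gamma,1)$ over the symmetric acceptance interval. This matches the paper's argument step for step, including the identification of the nonroutine ingredient (Gaussianity of the functional $G_u$) as the content of Proposition~\ref{prop2}.
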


Since $\ga$ is proportional to the relative strength of the noise (noise-to-signal ratio, NSR), \eqref{beta 1d} implies that, as expected, the power of detecting an edge grows as $\ga$ decreases (SNR increases). 

So far we have considered $\rho$ to be fixed. It is reasonable to expect that as the edge detection window $I_\rho$, increases, the test becomes more sensitive. Hence we wish to analyze the behaviour of $\beta$ when $\rho$ increases (i.e., as we use more of the image around the edge). 

Suppose $u(t)=\chi_\rho(t)\text{sgn}(t)/\rho$, where $\chi_\rho(t)$ is the characteristic function of $I_\rho$. 
Then
\be
H_u=\frac2{\rho}\int_0^\rho \ft(t)\dd t=2\int_0^1 \ft(\rho s)\dd s\to 1,\rho\to\infty,
\ee
because $\ft(t)\to1/2$ as $t\to\infty$. Furthermore, using that $C_1$ is even, we find using \eqref{ga sq}
\begin{equation}\bs
\gamma^2 =& \rho^{-2}\int_{-\rho}^\rho\int_{-\rho}^\rho\text{sgn}(t_1)\text{sgn}(t_2)C_1(t_1-t_2)\dd t_1\dd t_2\\
=& 2\int_0^1\int_0^1 \big[C_1(\rho(s_1-s_2))-C_1(\rho(s_1+s_2))\big]\dd s_1\dd s_2.
\end{split}
\end{equation}
Since $\ik$ is compactly supported, \eqref{C1 def} implies $C_1(t)=O(1/t)$, $t\to\infty$. Hence
\be
\gamma^2 \le c \int_0^2 |C_1(\rho s)|\dd s= O(\ln\rho/\rho),\ \rho\to\infty.
\ee
Therefore $H_u/\ga\to\infty$ as $\rho\to\infty$. By \eqref{beta 1d}, this implies the following result.

\begin{lemma}\label{lem:1dpower}
The power, $1-\beta$, to detect a given edge at $\vx_0\in\s$ in 1D can be made as close to 1 as needed regardless of the SNR by selecting $\rho$ sufficiently large, assuming $0<\e\ll1$ is sufficiently small. 
\end{lemma}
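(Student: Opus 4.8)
\section*{Proof proposal for Lemma~\ref{lem:1dpower}}

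The plan is to reduce the statement to the limiting behaviour of the single quantity $|H_u|/\ga$ that appears in the power formula \eqref{beta 1d}. For a fixed size $\al$, the threshold $c_\al=\Upsilon_0^{-1}(1-\al)$ is a fixed constant, so by Theorem~\ref{th3.1} the probability of type II error $\bt$ is a function of the noncentrality ratio $r:=|H_u|/\ga$ alone. It therefore suffices to establish two facts: (i) $\bt\to0$ as $r\to\infty$; and (ii) there is an admissible choice of the weight $u$ for which $r\to\infty$ as $\rho\to\infty$. The lemma then follows by choosing $\rho$ large enough to push $\bt$ below any prescribed tolerance.

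Step (i) is immediate from \eqref{beta 1d}. As $r\to\infty$, both arguments $r\pm c_\al$ tend to $+\infty$, so $\text{erf}(r+c_\al)\to1$ and $\text{erf}(r-c_\al)\to1$, whence $\bt=\tfrac12[\text{erf}(r+c_\al)-\text{erf}(r-c_\al)]\to0$, and $1-\bt\to1$. This is the mechanism behind the phrase ``regardless of the SNR'': the noise level will affect only the constant in the decay of $\ga^2$ below, hence how large $\rho$ must be chosen, but never prevents $r\to\infty$.

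For step (ii) I would take the explicit weight $u(t)=\chi_\rho(t)\,\text{sgn}(t)/\rho$ used above. Since $\ft(t)\to\tfrac12$ as $t\to\infty$, one gets $H_u=2\int_0^1\ft(\rho s)\,\dd s\to1$, a constant independent of the noise. The crux is the decay of the variance $\ga^2$ in \eqref{ga sq}, and the key estimate is $C_1(t)=O(1/t)$ as $t\to\infty$: because $\ik$ is compactly supported, so is $\ik'\star\ik'$, and hence in \eqref{C1 def} the integrand $(\ik'\star\ik')(\cos(\al-\theta_0)t)$ is nonzero only for those $\al$ with $|\cos(\al-\theta_0)|\le c/t$. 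As $\cos(\al-\theta_0)$ has only nondegenerate zeros, at $\al=\theta_0\pm\pi/2$, the measure of this set is $O(1/t)$ while the integrand stays bounded; this yields $C_1(t)=O(1/t)$. A change of variables then gives $\ga^2\le c\int_0^2|C_1(\rho s)|\,\dd s=\rho^{-1}\int_0^{2\rho}|C_1(\tau)|\,\dd\tau=O(\ln\rho/\rho)$, where the logarithm comes from integrating the $1/\tau$ tail. Consequently $r=|H_u|/\ga\to\infty$.

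I expect the main obstacle to be not the algebra above but the compatibility of the two limits with the validity of the local model. The detection window is $[-\rho\e,\rho\e]$ in physical coordinates, whereas \eqref{complete appr} and Theorem~\ref{GRF_thm} describe $f^{\text{rec}}_{\e,\eta}$ only in the rescaled limit $\e\to0$ on a bounded domain $D$. The order of quantifiers is therefore essential: given a target power, I would first fix $\rho$ large enough, via steps (i)--(ii), to force $\bt$ below the tolerance in the idealized Gaussian model, and only then take $\e\ll1$ small enough, depending on $\rho$, that \eqref{complete appr} remains accurate on the enlarged window. This is precisely the content of the qualifier ``assuming $0<\e\ll1$ is sufficiently small'' in the statement, and it is the step requiring the most care to phrase rigorously; once it is arranged, the explicit estimates make $|H_u|/\ga$ arbitrarily large for any fixed $\sigma$, so $1-\bt$ can be brought as close to $1$ as desired.
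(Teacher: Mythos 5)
Your proposal is correct and follows essentially the same route as the paper: the same weight $u(t)=\chi_\rho(t)\,\mathrm{sgn}(t)/\rho$, the limits $H_u\to1$ and $\gamma^2=O(\ln\rho/\rho)$ via the tail bound $C_1(t)=O(1/t)$, and the conclusion $|H_u|/\gamma\to\infty$ fed into \eqref{beta 1d}. Your two additions --- the explicit justification of $C_1(t)=O(1/t)$ from the nondegenerate zeros of $\cos(\al-\theta_0)$, and the careful ordering of quantifiers (fix $\rho$ first, then take $\e$ small enough for the local model \eqref{complete appr} to hold on the enlarged window) --- are correct refinements of steps the paper asserts without detail.
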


Of course, in practice $\rho$ cannot increase without bounds. The limit on how large $\rho$ can be depends on how small $\e$ is. Hence, in practice, edges with SNR below some threshold cannot be detected.

Even if $\rho$ is fixed, the power of the test depends on the choice of $u$. In Section \ref{experiments}, we conduct simulated experiments where we consider $u(t) = \chi_\rho(t)\text{sgn}(t)$ and $u(t) = \chi_\rho(t) t$.

\section{Analysis in $\br^2$}\label{sec:anal2D}

\subsection{Edge detection in 2D}\label{2D_edge}

Define the ball $B_\rho(\chy_0):=\{\chy\in\br^2:\Vert\chy-\chy_0\Vert\le\rho\}$. We focus our attention on an $O(\epsilon)$-sized neighborhood around a point $\vx_0$ that satisfies assumption~\ref{ass:x0}, namely $\vx_0+\e B_\rho(\bold 0)$. Similarly to \eqref{Fu}, define three vectors, $F,G,H\in\br^2$,  
\begin{equation}\label{5.1}
\begin{split}
F& := \int_{B_\rho(\bold 0)} \chy f^{\text{rec}}( \chy)\dd \chy
= \int_{B_\rho(\bold 0)} \chy\ft(\chy)\dd \chy + \int_{B_\rho(\bold 0)} \chy N^{\text{rec}}( \chy )\dd \chy
=: H + G.
\end{split}
\end{equation}
As before, $H$ and $G$ are the deterministic and random parts of $F$, respectively. 

Comparing \eqref{5.1} with \eqref{Fu} we see that the 2D analog of $u(t)$ is $u(\cvy)=\cvy\chi_\rho(\cvy)$, where $\chi_\rho$ is the characteristic function of $B_\rho(\bold 0)$. The advantage of this choice is that its edge sensitivity is the same in all directions. Indeed, one has 
\be
H=\int_{B_\rho(\bold 0)} \chy\ft(\vec\Theta_0\cdot\chy)\dd \chy=\int_{s^2+t^2\le\rho^2} (t\vec\Theta_0+s\vec\Theta_0^\perp)\dd s\ft(t)\dd t.
\ee
Recall that $\vec\Theta_0$ is orthogonal to the edge at $\vx_0$. The integral containing the term $s\vec\Theta_0^\perp$ vanishes due to symmetry, and we find
\be\label{H no noise}
H=\int_{-\rho}^\rho \int_{|s|\le(\rho^2-t^2)^{1/2}} \dd s\, t\ft(t)\dd t\vec\Theta_0=\left[4\int_0^\rho t(\rho^2-t^2)^{1/2} \ft(t)\dd t\right]\vec\Theta_0.
\ee
Thus, in the absence of noise, $F=H$ is a vector normal to the edge, whose magnitude is proportional to the value of the jump. The coefficient of proportionality is the quantity in brackets in \eqref{H no noise}.

To develop our test statistic, we need the distribution of the random vectors $G$ and $F$. As in the preceding section, we first consider $G$. 
It follows from Proposition \ref{prop2} in the appendix that  $G\sim \mathcal N(0,\hat C)$ is a Gaussian random vector with the covariance matrix
\be\label{vec covar}\bs
\hat C=(\hat C_{i,j}),\ \hat C_{i,j}=\Eb(G_iG_j)&=\Eb\left(\int_{B_\rho(\bold 0)} \chxc_i \Nrec(\chx)\dd\chx\int_{B_\rho(\bold 0)} \chyc_j\Nrec(\chy)\dd\chy\right)\\
&=\int_{B_\rho(\bold 0)}\int_{B_\rho(\bold 0)} \chxc_i\chyc_j\  C(\chx-\chy)\dd\chx \dd\chy,\ i,j\in\{1,2\},
\end{split}
\ee
where $C$ is given in \eqref{Cov main}. To compute the power of the edge detection test we first calculate the distribution of the random vector $F=H+G$. As before, it is easy to see that $F$ is also a bivariate Gaussian random vector, $F\sim \mathcal N(H,\hat C)$, and $\hat C$ is the covariance matrix \eqref{vec covar}. 
\begin{remark}\label{rem:swf} It follows that $\Eb(F)=H$. Thus, $F$ is an unbiased estimator of the vector $H$, which is normal to the edge and whose magnitude is proportional to the value of the jump. In this sense, since $\mathcal{S}=\text{sing\,supp}(f)$, then $\{(x_0,F):x_0\in \mathcal{S}\}$ can be regarded as a statistical analog of the wavefront set of $f$.
\end{remark}
\begin{remark}\label{rem:cov} As is easily seen from \eqref{Cov main}, if $\sigma(\al,\vec\al\cdot\vx_0)$ is a constant function, then $C(\chx)$ is a function of $|\chx|$, and the matrix $\hat C$ satisfies $\hat C=\nu^2\hat I_2$, where  $\nu_2=\hat C_{1,1}=\hat C_{2,2}>0$. Here $\hat I_2$ is the $2\times2$ identity matrix. We will use this fact later in the simulations in Section \ref{experiments}.
\end{remark}

Now we define a hypothesis test  for 2D edge detection to check whether $\vx_0\in\s$. 

\begin{theorem}[2D edge detection]\label{thm4.1}
Let $\vx_0$ satisfy assumption~\ref{ass:x0}. Consider an $O(\epsilon)$-sized neighborhood $\vx_0+\epsilon B_\rho(\bold 0)$ as above. Let $F$ be the random vector given in \eqref{5.1}. Consider the following two hypothesis:
\be\label{hypo 2D}
\hH_0 (\text{i.e. }x_0\notin \mathcal{S}): F \sim \Nc(0, \hat{C}); \quad \hH_1 (\text{i.e. }x_0\in \mathcal{S}): F \sim \Nc(H, \hat{C}),\ H \neq 0 .
\ee
Let the test statistic be $Z := \Vert F\Vert_{\hat{C}^{-1}}^2$. The LRT of size $\al$ for testing $\hH_0$ against $\hH_1$ rejects $\hH_0$ if $Z > -2\ln\al$. The power of the LRT is $1-\Upsilon_1(-2\ln\al)$, where $\Upsilon_1(x)$ is the CDF of a non-central $\chi^2$ random variable, $\chi_2^2(\mu)$, with the non-centrality parameter $\mu=\Vert H\Vert_{\hat{C}^{-1}}^2$. 
\end{theorem}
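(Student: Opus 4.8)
The plan is to apply the general likelihood ratio test (LRT) machinery developed in Section~\ref{ssec:primer} with $d=2$. The distributions of $F$ under both hypotheses have already been established immediately before the theorem: $F\sim\Nc(0,\hat C)$ under $\hH_0$ and $F\sim\Nc(H,\hat C)$ under $\hH_1$, where $\hat C$ is the covariance matrix in \eqref{vec covar}. First I would record that assumption~\ref{ass:x0}\eqref{x0_2} forces $\sigma^2(\al,\vec\al\cdot\vx_0)\neq0$ on an open set of angles, which makes $C$ (and hence $\hat C$) nondegenerate; thus $\hat C$ is positive definite, $\hat C^{-1}$ and $\hat C^{-1/2}$ exist, and the statistic $Z=\Vert F\Vert_{\hat C^{-1}}^2=F^T\hat C^{-1}F$ is well defined. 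By the computation in \eqref{Z primer}, this $Z$ is exactly twice the maximized log-likelihood ratio with $d=2$.

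Next I would pin down the rejection threshold. Under $\hH_0$ the primer gives $Z\sim\chi_2^2$, and the special feature of dimension two is that the central $\chi_2^2$ distribution coincides with an exponential: its CDF is $\Upsilon_0(x)=1-e^{-x/2}$ for $x\ge0$. Solving $P(\chi_2^2>c_\al)=e^{-c_\al/2}=\al$ gives $c_\al=-2\ln\al$, which is precisely the threshold stated in the theorem. This explains the appearance of $-2\ln\al$: it is simply $\Upsilon_0^{-1}(1-\al)$ specialized to $d=2$.

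The only step requiring more than a citation is identifying the law of $Z$ under $\hH_1$, and here I would whiten. Setting $W:=\hat C^{-1/2}F$, one has $W\sim\Nc(\hat C^{-1/2}H,\hat I_2)$ under $\hH_1$, and $Z=\Vert F\Vert_{\hat C^{-1}}^2=\Vert W\Vert^2=W_1^2+W_2^2$ is a sum of squares of two independent unit-variance Gaussians with nonzero means. By definition this is a non-central chi-squared variable $\chi_2^2(\mu)$ whose non-centrality parameter is the squared length of the mean vector,
\[
\mu=\Vert\hat C^{-1/2}H\Vert^2=H^T\hat C^{-1}H=\Vert H\Vert_{\hat C^{-1}}^2,
\]
exactly as claimed. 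Writing $\Upsilon_1$ for its CDF, the power of the size-$\al$ test is $P(Z>c_\al\mid\hH_1)=1-\Upsilon_1(c_\al)=1-\Upsilon_1(-2\ln\al)$, which finishes the argument.

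I do not expect a serious obstacle: once the Gaussianity of $F$ from \eqref{5.1} and \eqref{vec covar} is in hand, the result is a textbook consequence of the LRT framework. The only points needing care are justifying invertibility of $\hat C$ for the whitening step and recognizing the closed form of the $d=2$ central $\chi^2$ CDF, which is what yields the clean threshold $-2\ln\al$ rather than an implicit quantile.
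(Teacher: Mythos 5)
Your proposal is correct and follows essentially the same route as the paper's proof: invoke the LRT framework of Section~\ref{ssec:primer} with $d=2$, use the closed form $\Upsilon_0(x)=1-e^{-x/2}$ of the central $\chi_2^2$ CDF to obtain the threshold $-2\ln\al$, and identify $Z$ under $\hH_1$ as non-central $\chi_2^2(\mu)$ with $\mu=\Vert H\Vert_{\hat C^{-1}}^2$. The only difference is that you spell out the whitening step $W=\hat C^{-1/2}F$ and the positive definiteness of $\hat C$, facts the paper simply cites as known.
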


\begin{proof}
The proof of the theorem follows immediately from the LRT theory in Section~\ref{ssec:primer} and \eqref{5.1}. Here we just need to add two facts. First, the CDF of $\chi_2^2$ ($d=2$ degrees of freedom) is $\Upsilon_0(x)=1-e^{-x/2}$. The second fact is that under $\hH_1$, the test statistic $Z$ follows a non-central $\chi_2^2$ distribution, $Z\sim \chi_2^2(\mu)$, where $\mu=\Vert H\Vert_{\hat{C}^{-1}}^2$ is the noncentrality parameter.  
\end{proof}

To obtain an analog of Lemma~\ref{lem:1dpower}, suppose without loss of generality that the edge is normal to the $x_1$-axis (see \eqref{H no noise} and the paragraph that follows). Then $H=(h,0)$, and
\be
h/\rho^3=4\int_0^1 s(1-s^2)^{1/2}\ft(\rho s)\dd s\to 2/3,\ \rho\to\infty.
\ee
As before, $C(|\chx|)=O(|\chx|^{-1})$ as $|\chx|\to\infty$. By \eqref{vec covar},
\be\bs
\hat C_{i,j}=&\int_{|\chx|\le\rho}\int_{|\chy|\le\rho} \chxc_i\chyc_jC(\chx-\chy)\dd\chx \dd\chy\\
=&\rho^6\int_{|{\bold s}|\le1}\int_{|{\bold t}|\le1} s_i t_j  C(\rho({\bold s}-{\bold t}))\dd {\bold s} \dd {\bold t}=O(\rho^5),\ i,j\in\{1,2\}.
\end{split}
\ee
By simple linear algebra, the last two equations imply
\be\bs
\mu=\Vert H\Vert_{\hat{C}^{-1}}^2=H^T\hat C^{-1}H\ge \frac{\Vert H\Vert^2}{\la_{\max}(\hat C)}\ge \frac{\Vert H\Vert^2}{\text{trace}(\hat C)}\ge c\rho,
\end{split}
\ee
which proves the desired assertion. Here we have used that $\hat C$ is positive semidefinite. This proves the following lemma.

\begin{lemma}\label{lem:2dpower}
The power, $1-\beta$, to detect a given edge at $\vx_0\in\s$ in 2D can be made as close to 1 as needed regardless of the SNR by selecting $\rho$ sufficiently large, assuming $0<\e\ll1$ is sufficiently small. 
\end{lemma}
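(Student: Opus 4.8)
The plan is to reduce the entire statement to a single asymptotic claim about the noncentrality parameter. By Theorem~\ref{thm4.1} the power equals $1-\Upsilon_1(-2\ln\al)$, where the rejection threshold $-2\ln\al$ is fixed once the size $\al$ is chosen, and $\Upsilon_1$ is the CDF of the non-central $\chi_2^2(\mu)$ distribution with $\mu=\Vert H\Vert_{\hat C^{-1}}^2$. Since the mass of a non-central chi-squared distribution shifts to the right as its noncentrality parameter grows, $\Upsilon_1(-2\ln\al)\to0$ as $\mu\to\infty$ for any fixed $\al$. Hence it suffices to show $\mu\to\infty$ as $\rho\to\infty$, and the whole argument becomes a matter of tracking the growth rates in $\rho$ of $\Vert H\Vert^2$ and of the covariance matrix $\hat C$ (the SNR enters only through a fixed positive constant, so the conclusion holds regardless of its value).

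First I would estimate $\Vert H\Vert$. Taking, without loss of generality, the edge normal to the $x_1$-axis so that $H=(h,0)$, formula \eqref{H no noise} gives $h=4\int_0^\rho t(\rho^2-t^2)^{1/2}\ft(t)\dd t$. Rescaling $t=\rho s$ and using that $\ft(\rho s)\to1/2$ pointwise as $\rho\to\infty$ yields $h\sim(2/3)\rho^3$, so that $\Vert H\Vert^2=h^2\asymp\rho^6$, which in particular supplies the lower bound $\Vert H\Vert^2\ge c\rho^6$ that I will need.

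Next I would estimate the entries $\hat C_{i,j}$ in \eqref{vec covar}. The key input is the decay $C(\chx)=O(|\chx|^{-1})$ as $|\chx|\to\infty$, which follows from the compact support of $\ik$ in the representation \eqref{Cov main} of $C$. Rescaling the double integral by $\chx=\rho\bold s$, $\chy=\rho\bold t$ produces a factor $\rho^6$ from the monomials and Jacobians, while the decay of $C$ over the $O(\rho)$-scale argument contributes an additional $\rho^{-1}$; hence $\hat C_{i,j}=O(\rho^5)$ and so $\text{trace}(\hat C)=O(\rho^5)$. Combining this with the $H$-estimate, and using that $\hat C$ is positive semidefinite so that $\la_{\max}(\hat C)\le\text{trace}(\hat C)$, I obtain
\be
\mu=H^T\hat C^{-1}H\ge\frac{\Vert H\Vert^2}{\la_{\max}(\hat C)}\ge\frac{\Vert H\Vert^2}{\text{trace}(\hat C)}\ge\frac{c\rho^6}{c\rho^5}=c\rho\to\infty,\ \rho\to\infty,
\ee
which proves the lemma.

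The main obstacle I anticipate is the covariance estimate, specifically justifying the extra power $\rho^{-1}$. After the substitution $\bold u=\bold s-\bold t$ and $\bold w=\rho\bold u$, the rescaled integral reduces (since the monomial weights $s_i,t_j$ are bounded on the unit ball) to controlling $\rho^{-2}\int_{|\bold w|\le 2\rho}|C(\bold w)|\dd\bold w$; the point is that in two dimensions the $O(|\bold w|^{-1})$ decay combines with the polar Jacobian to give $\int_{|\bold w|\le 2\rho}|C(\bold w)|\dd\bold w=O(\rho)$, so that the integral is $O(\rho^{-1})$ rather than $O(1)$. One must also check that the singularity of the bound near $\bold w=0$ is integrable (it is, again because of the polar Jacobian), so that no concentration near the diagonal $\bold s=\bold t$, where $C$ is largest, spoils the gain.
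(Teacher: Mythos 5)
Your proof is correct and follows essentially the same route as the paper's: align the edge with the $x_1$-axis so that $\Vert H\Vert^2\asymp\rho^6$ via $h/\rho^3\to2/3$, use the decay $C(\chx)=O(|\chx|^{-1})$ to get $\hat C_{i,j}=O(\rho^5)$ by rescaling, and conclude $\mu\ge\Vert H\Vert^2/\text{trace}(\hat C)\ge c\rho\to\infty$ from positive semidefiniteness. Your added justifications — the polar-coordinate computation behind the $O(\rho^5)$ covariance bound and the observation that $\Upsilon_1(-2\ln\al)\to0$ as the noncentrality parameter grows — are details the paper leaves implicit, and both are sound.
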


The same comment as after Lemma~\ref{lem:1dpower}, which concerned the 1D case, applies in the 2D case as well.

\subsection{Uncertainty quantification}\label{ssec:uq}
Recall that for any Gaussian random vector $X\in\br^2$, $X\sim \Nc(\mu,\Sigma)$, the contour of constant probability density is an ellipse:
\be\label{Er def}\bs
    E_r(\mu) :=&\big\{X:  (2\pi)^{-1} |\hat{C}|^{-1/2} \exp\big(-\Vert X-\mu\Vert_{\hat{C}^{-1}}^2/2\big)=c_0\big\}\\
    =&\{X: \Vert X-\mu\Vert_{\hat{C}^{-1}}^2=r\},\ r=-2\ln\big(2\pi |\hat{C}|^{1/2}c_0\big).
\end{split}
\ee

Recall that a {\it confidence region}, $\text{CR}(F)$, is a mapping from the parameter space, $\br^2$, into the set of all subspaces of $\br^2$. More precisely, it maps $\br^2\ni F\to \text{CR}(F)\subset \br^2$. It is important to emphasize that $\text{CR}(F)$ is a random set due to the randomness in $F$. We say that $\text{CR}(F)$ is at the confidence level $1-\al$ if $\Pb(H\in \text{CR}(F))\ge 1-\al$ and denote this by $\text{CR}_\al(F)$. To clarify, the condition $\Pb(H\in \text{CR}_\al(F))\ge 1-\al$ means that if $F$ is randomly drawn from $\Nc(H,\Sigma)$ a large number of times, then $H\in \text{CR}_\al(F)$ in at least $(1-\al)\%$ of the cases. See \cite[Section 14.4.2]{LeRo22} and \cite[Sections 9.1, 9.2]{CB_book} for more details.

Based on the above, we search for a confidence region of the form $E_r(F)$, where $r$ is determined from $\al$. By \eqref{hypo 2D}, the PDF of $F$ is $f(u;H,\hat C) = (2\pi)^{-1} |\hat{C}|^{-1/2} \exp\big(-\Vert u-H\Vert_{\hat{C}^{-1}}^2/2\big)$. Hence, by definition, we should solve the following equation
\be\label{CR v1}
\inf_{H\in\br^2}\int_{\br^2} \chi(H; E_r(u))f(u;H,\hat C)\dd u=1-\al.
\ee
Here $\chi(H; E_r(u))=1$ if $H\in E_r(u)$ and $\chi(H; E_r(u))=0$ if $H\not\in E_r(u)$. By \eqref{Er def}, $\chi(H; E_r(u))$ is a function of only $u-H$. Likewise, $f(u;H,\hat C)$ is a function of only $u-H$. Hence the integral in \eqref{CR v1} is independent of $H$ and we can just set $H=0$. This gives a simplified equation for $r$ in terms of $\al$:
\be\label{eq:CRF}
\frac1{2\pi |\hat{C}|^{1/2}}\int_{\Vert u\Vert_{\hat{C}^{-1}}^2\le r}  \exp\big(-\Vert u\Vert_{\hat{C}^{-1}}^2/2\big)\dd u=1-\al,
\ee
i.e. $\Pb(\Vert u\Vert_{\hat{C}^{-1}}^2\le r)=1-\alpha$. Changing variables $v=\hat C^{-1/2}u$, we see that $v\sim \Nc(0,\hat I_2)$ and hence $\Vert u\Vert_{\hat{C}^{-1}}^2=v^Tv=\lVert v\rVert^2\sim \chi_2^2$. Thus $r$ is found from the transformed equation $\Pb(\lVert v\rVert^2\leq r)=1-\alpha$, which gives the familiar $r=\Upsilon_0^{-1}(1-\alpha)=-2\ln\al$. This shows that the desired confidence region is the ellipsoid  $E_r(F)$, where $r=-2\ln\al$.

For the convenience of the reader, we have just derived the confidence region from first principles. An easier and more direct approach, which gives the exact same $\text{CR}_\al(F)$, is based on statistical test inversion \cite[Theorem 9.2.2]{CB_book}. It goes as follows. Suppose $F$ is our observation. For each $H\in\br^d$, we test the null hypothesis $\hH_0:F\sim\Nc(H,\Sigma)$ versus the alternative: $\hH_1:F\not\sim\Nc(H,\Sigma)$ at the level $1-\al$. Then the confidence region $\text{CR}_\al(F)$ is the set of all $H$ for which the null hypothesis is accepted. By writing out the condition on $H$ and $F$ which results in the acceptance of $\hH_0$ we recover the same set $\text{CR}_\al(F)$. 

To summarize, we obtained the following result.

\begin{theorem}
A confidence region for the true parameter value $H$ at the level $1-\al$, denoted by $\text{CR}_{\alpha}(F)$, is given by the following random set:
\be
\text{CR}_{\alpha}(F)=\big\{H\in \Rb^2:
\Vert F - H\Vert_{\hat{C}^{-1}} \leq (-2\ln\al)^{1/2} \big\}.
\ee
\end{theorem}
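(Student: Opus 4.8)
The plan is to posit the ellipsoidal form of the region and then pin down its single free parameter by forcing exact coverage. The natural candidate is $\text{CR}_\al(F) = E_r(F) = \{H \in \Rb^2 : \Vert F - H\Vert_{\hat{C}^{-1}}^2 \le r\}$, since by \eqref{main pdf} the level sets of the Gaussian density are precisely the Mahalanobis balls centered at the mean; the only unknown is the scalar radius $r = r(\al)$, which I would fix by imposing $\Pb(H \in E_r(F)) = 1-\al$.

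The key step, which does essentially all of the work, is to observe that the coverage probability
\be
\Pb\big(H \in E_r(F)\big) = \int_{\Rb^2} \chi\big(H; E_r(u)\big)\, f(u; H, \hat{C})\,\dd u
\ee
is \emph{independent} of the unknown true value $H$. Indeed, both the membership indicator $\chi(H; E_r(u))$ and the density $f(u; H, \hat{C})$ depend on the pair $(u, H)$ only through the difference $u - H$ --- the former because $E_r(u)$ is the Mahalanobis ball of radius $r$ centered at $u$, and the latter by inspection of \eqref{main pdf}. A shift of the integration variable removes $H$ entirely, so I may set $H = 0$ without loss of generality. This collapses the problem to computing $\Pb(\Vert F\Vert_{\hat{C}^{-1}}^2 \le r)$ under $F \sim \Nc(0, \hat{C})$.

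The remainder is a standard whitening. Since the known covariance $\hat{C}$ is symmetric positive definite, $\hat{C}^{-1/2}$ exists, and the change of variables $v = \hat{C}^{-1/2} F$ yields $v \sim \Nc(0, \hat{I}_2)$ and $\Vert F\Vert_{\hat{C}^{-1}}^2 = v^T v = \Vert v\Vert^2 \sim \chi_2^2$. It then remains to solve $\Upsilon_0(r) = 1 - \al$, where $\Upsilon_0(x) = 1 - e^{-x/2}$ is the $\chi_2^2$ CDF already recorded in the proof of Theorem~\ref{thm4.1}; this gives $r = -2\ln\al$, exactly the constant in the statement.

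I do not anticipate a genuine obstacle, as this argument is the dual of the size-$\al$ test of Theorem~\ref{thm4.1}. The two points needing care are the translation-invariance step, which must be an exact identity of the coverage integral rather than an approximation, and the invertibility (indeed positive-definiteness) of $\hat{C}$, needed so that $\hat{C}^{-1/2}$ and the whitening are well-defined. As a cross-check one can avoid the integral altogether via test inversion \cite[Theorem 9.2.2]{CB_book}: for each fixed $H_0$ the size-$\al$ LRT of $\hH_0: F \sim \Nc(H_0, \hat{C})$ accepts precisely when $\Vert F - H_0\Vert_{\hat{C}^{-1}}^2 \le -2\ln\al$, and the set of all accepted $H_0$ is by construction a level-$(1-\al)$ confidence region, reproducing the same $\text{CR}_\al(F)$.
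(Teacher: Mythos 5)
Your proposal is correct and matches the paper's own proof essentially step for step: the paper likewise posits the ellipsoidal region $E_r(F)$, notes that both the membership indicator $\chi(H;E_r(u))$ and the density $f(u;H,\hat C)$ depend only on $u-H$ so the coverage integral is independent of $H$ (allowing $H=0$), whitens via $v=\hat C^{-1/2}u$ to get $\Vert u\Vert_{\hat C^{-1}}^2\sim\chi_2^2$, and solves $\Upsilon_0(r)=1-\al$ to obtain $r=-2\ln\al$. Even your cross-check via test inversion is the paper's own alternative derivation, citing the same result of Casella and Berger.
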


\section{Experiments}\label{experiments}\label{sec:exps}

We now conduct experiments to validate our theory. For reconstruction, we use the formula in \eqref{recon-orig}, and to generate Radon transform (line integral) data we use the Matlab function ``radon." We consider first the 1D case described in Section \ref{first_prin}.

\subsection{1D edge detection}\label{ssec:1dexp}
In this example, we consider the reconstruction target shown in figure \ref{fig1}, which is the characteristic function of a ball with radius $R=0.345$ and center $(0,-0.1)$. Throughout this section, we simulate noise $\eta \sim U(0,\sigma)$ from a uniform distribution with mean zero and standard deviation $\sigma$. We set $\epsilon = 0.007$ and the jump size is $\Delta f(\vx) = 1$ for any $\vx\in\s$. For the experiments, we select $\vx_0=(R,-0.1)$. We set $\kappa = 2\pi$, with $\kappa |\vx_0|$ irrational in line with Assumption~\ref{ass:x0}\eqref{x0_1}. The ball is slightly off center so that Assumption~\ref{ass:x0}\eqref{x0_5} is satisfied as well. The integral in \eqref{Fu} in this case is taken along the red line segment shown in figure \ref{fig1} to test the presence of an edge. 
\begin{figure}[!h]
\centering
\begin{subfigure}{0.3\textwidth}
\includegraphics[width=0.9\linewidth, height=3.2cm, keepaspectratio]{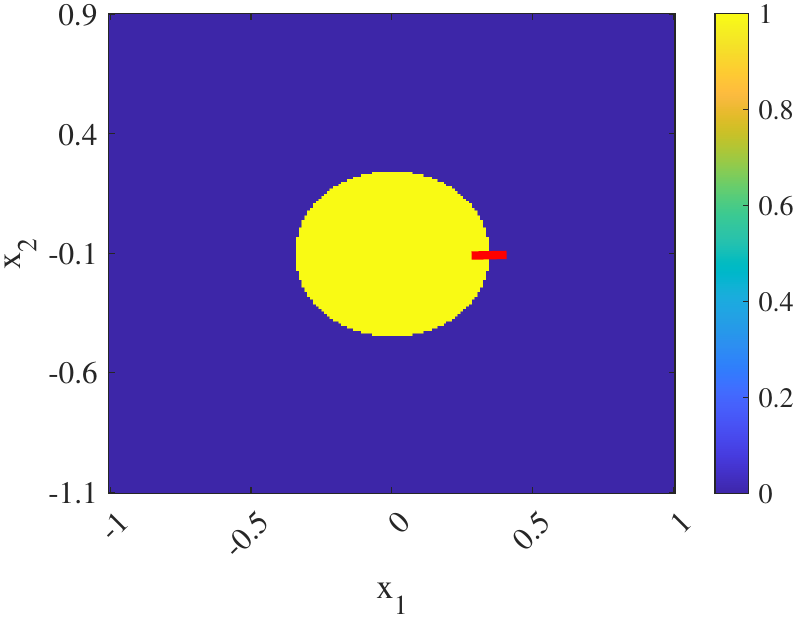}
\subcaption{phantom} \label{2a}
\end{subfigure}
\begin{subfigure}{0.3\textwidth}
\includegraphics[width=0.9\linewidth, height=3.2cm, keepaspectratio]{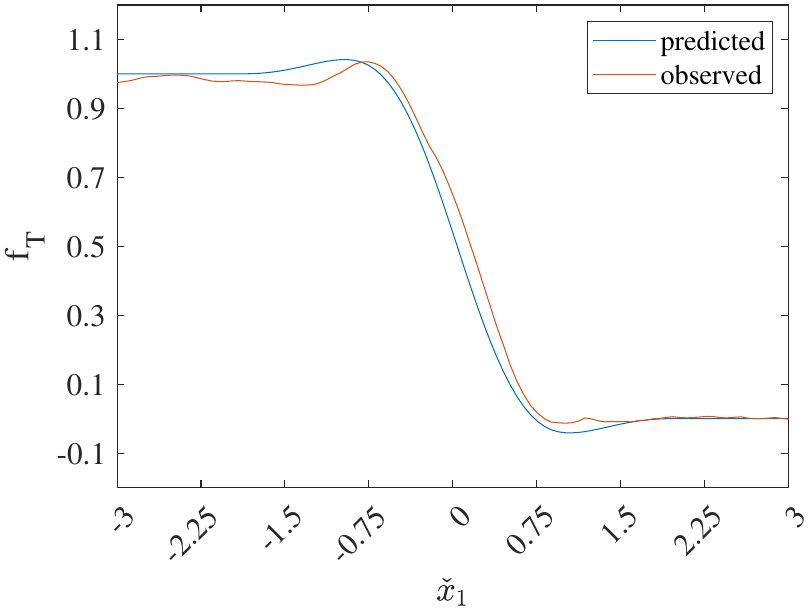}
\subcaption{line profile} \label{2b}
\end{subfigure}
\caption{(a) Test phantom is a ball. The red horizontal line, which represents the edge detection window $I_\rho$, has length $6\epsilon$ (i.e., $\rho = 3$). (b) A local 1D profile of the reconstruction of the phantom along the red horizontal segment and our prediction, $\ft$. The plot in (b) is shown using the checked coordinates which are scaled to $\e$ and centered on zero.}
\label{fig1}
\end{figure}

For our first example, we set $\sigma^2 = 3$ and $u(t) = \chi_\rho(t) t$. In this case, the distribution of the random variable $F_{u}$ in the absence and presence of an edge are shown in figure \ref{fig3}. To generate the histograms, we sampled $G_{u}$ (the random part of the reconstruction) $2\cdot 10^3$ times. In figure \ref{3a}, $10^3$ samples were used to calculate the null distribution histogram (no edge), and the remaining $10^3$ were used to calculate the shifted ``edge" histogram. We calculated $H_u$ using the $\ft$ plot of figure \ref{2b}.
\begin{figure}
\centering
\begin{subfigure}{0.3\textwidth}
\includegraphics[width=0.9\linewidth, height=3.2cm, keepaspectratio]{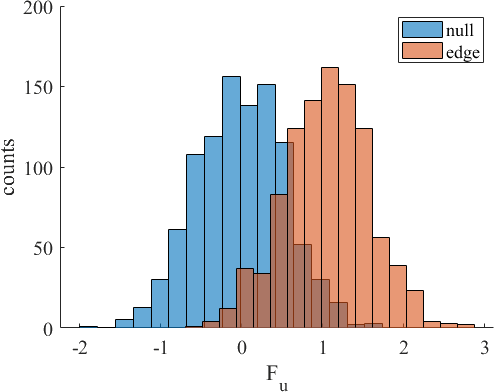}
\subcaption{$F_u$ histograms} \label{3a}
\end{subfigure}
\begin{subfigure}{0.3\textwidth}
\includegraphics[width=0.9\linewidth, height=3.2cm, keepaspectratio]{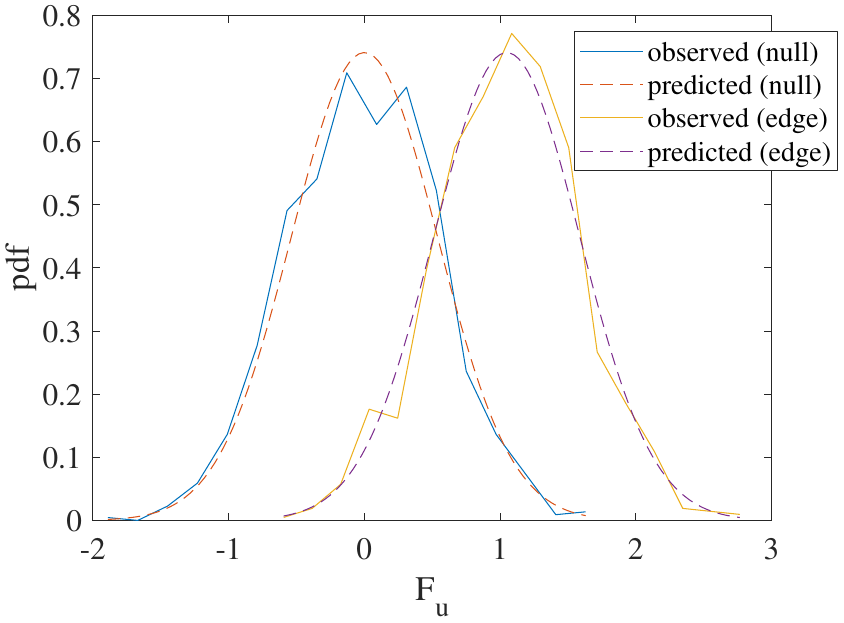}
\subcaption{prediction vs observed} \label{3b}
\end{subfigure}
\begin{subfigure}{0.3\textwidth}
\includegraphics[width=0.9\linewidth, height=3.2cm, keepaspectratio]{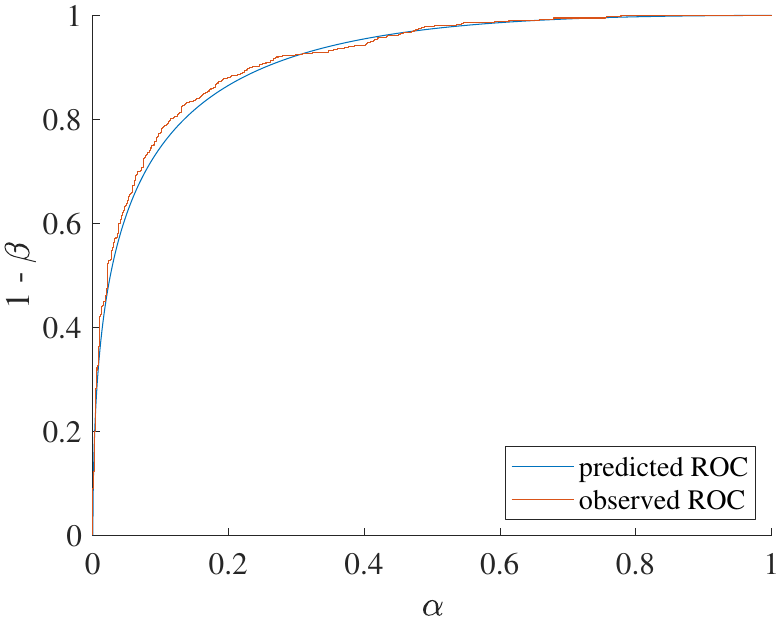}
\subcaption{ROC plots} \label{3c}
\end{subfigure}
\caption{(A) and (B) - example $F_u$ histograms and PDFs when $u(t) = \chi_\rho(t) t$. Recall that in the 1D case, the random variable $F_u$ is proportional to the jump magnitude across a possible edge location.  (C) - predicted and observed ROC curves.}
\label{fig3}
\end{figure}
In figure \ref{3b}, the predicted PDF curves based on the theory in Section~\ref{first_prin} are plotted against the PDFs calculated using the histograms in figure \ref{3a}, and they match well. 

We also calculate the Area Under the ROC Curve (AUC) to test the effectiveness of the proposed method. Letting $1-\bt(\alpha)$ be the function which defines the ROC curve, the AUC is computed as $\text{AUC} = 1-\int_0^1 \bt(\alpha)\dd\al$. The value predicted using \eqref{beta 1d} is $\text{AUC} = 0.92$, which is consistent with the histograms in figure \ref{3b} having mild overlap. The observed AUC also equals 0.92 to two significant figures. 
\begin{figure}
\centering
\begin{subfigure}{0.24\textwidth}
\includegraphics[width=0.9\linewidth, height=3.2cm, keepaspectratio]{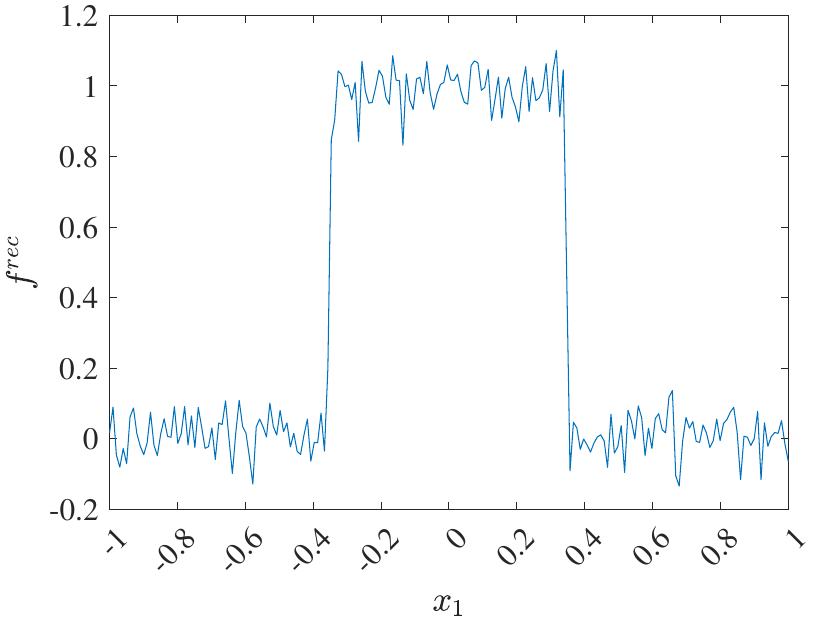}
\subcaption{$\text{AUC} = 0.99$, $\sigma = 0.87$} \label{5b}
\end{subfigure}
\begin{subfigure}{0.24\textwidth}
\includegraphics[width=0.9\linewidth, height=3.2cm, keepaspectratio]{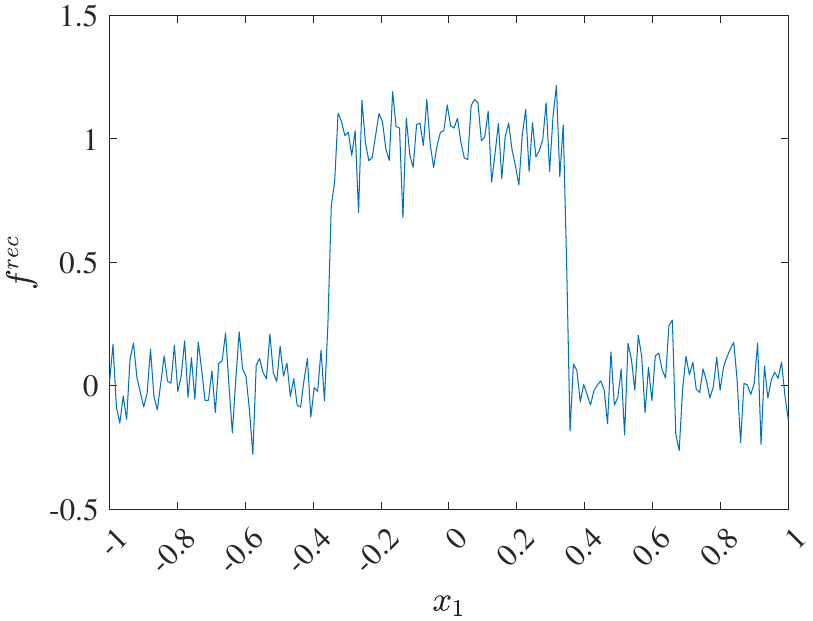}
\subcaption{$\text{AUC} = 0.93$, $\sigma = 1.73$} \label{5c}
\end{subfigure}
\begin{subfigure}{0.24\textwidth}
\includegraphics[width=0.9\linewidth, height=3.2cm, keepaspectratio]{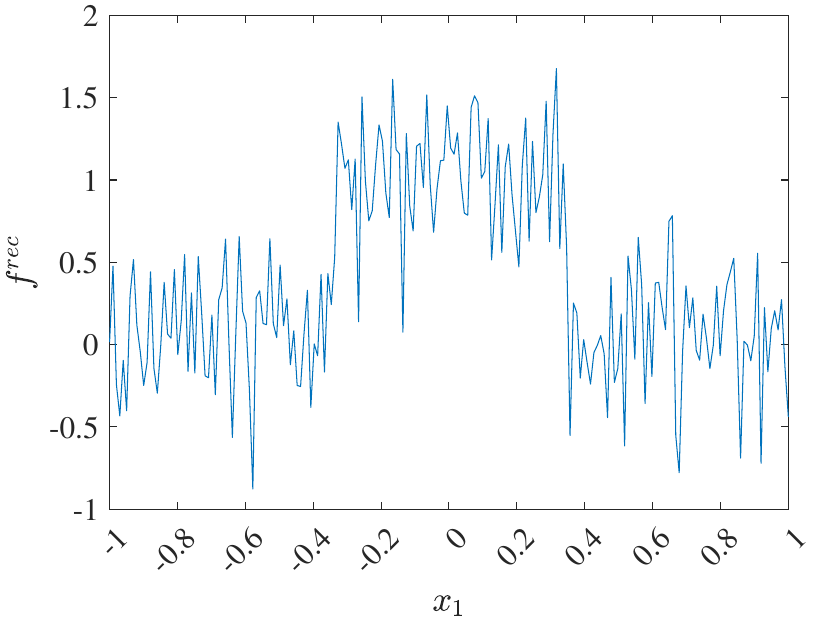}
\subcaption{$\text{AUC} = 0.69$, $\sigma = 5.2$} \label{5d}
\end{subfigure}
\begin{subfigure}{0.24\textwidth}
\includegraphics[width=0.9\linewidth, height=3.2cm, keepaspectratio]{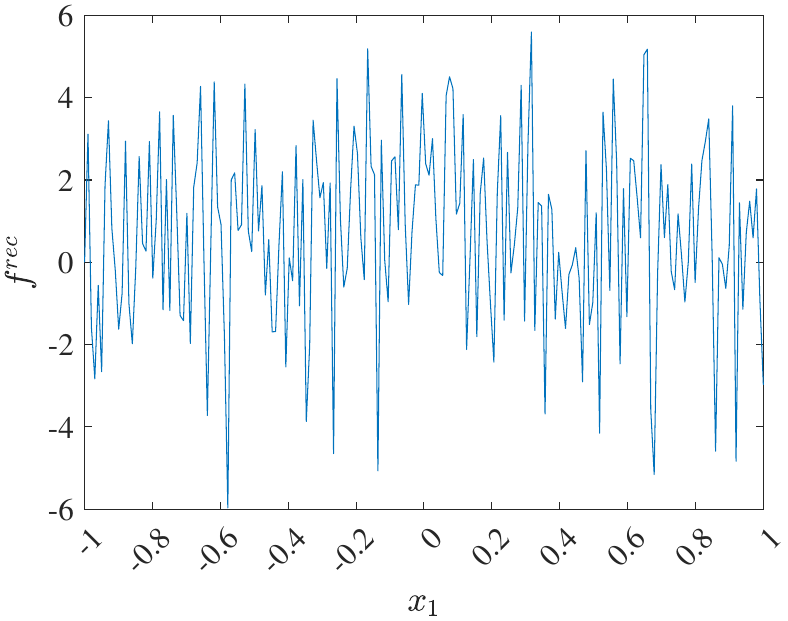}
\subcaption{$\text{AUC} = 0.53$, $\sigma = 34.6$} \label{5f}
\end{subfigure}
\caption{1D line profiles (along $\{x_2 = -0.1\}$) of reconstructions of the image phantom for varying $\sigma$.}
\label{fig5}
\end{figure}
\begin{figure}
\centering
\begin{subfigure}{0.3\textwidth}
\includegraphics[width=0.9\linewidth, height=3.2cm, keepaspectratio]{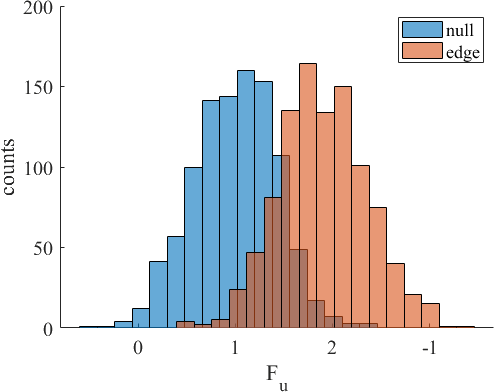}
\subcaption{$F_u$ histograms} \label{7a}
\end{subfigure}
\begin{subfigure}{0.3\textwidth}
\includegraphics[width=0.9\linewidth, height=3.2cm, keepaspectratio]{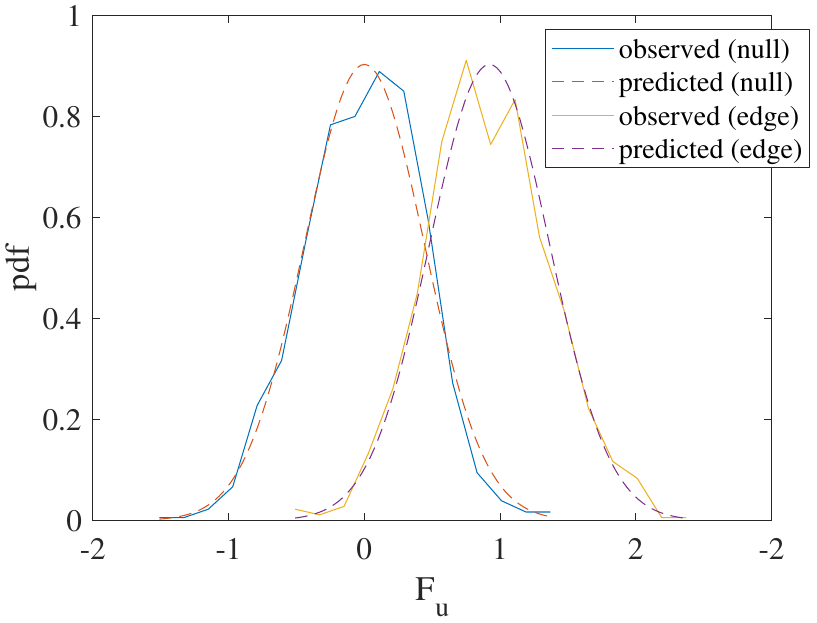}
\subcaption{prediction vs observed} \label{7b}
\end{subfigure}
\begin{subfigure}{0.3\textwidth}
\includegraphics[width=0.9\linewidth, height=3.2cm, keepaspectratio]{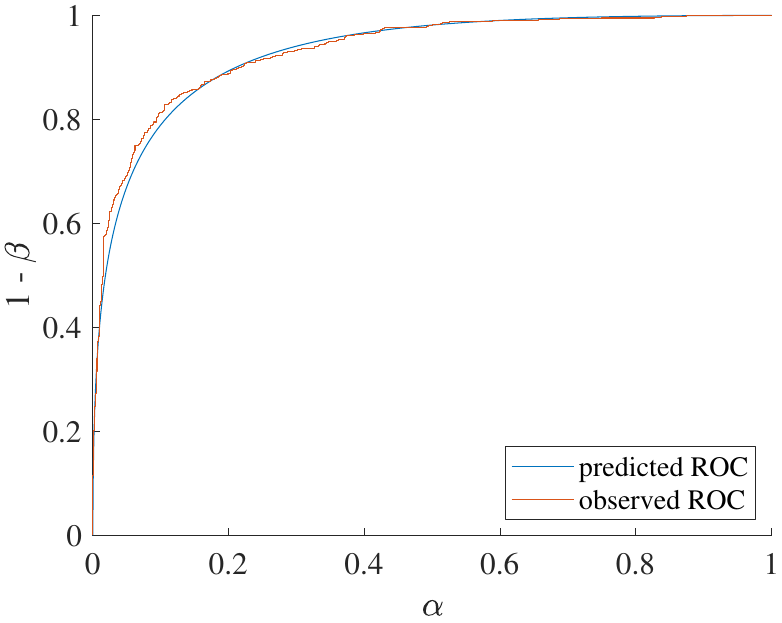}
\subcaption{ROC plots} \label{7c}
\end{subfigure}
\caption{(A) and (B) - example $F_u$ histograms and PDFs when $u(t) = \chi_\rho(t)\text{sgn}(t)$. These plots are similar to the ones observed in Fig. 3 illustrating that $u(t)=\chi_{\rho}(t)t$ and $u(t)=\chi_{\rho}\text{sgn}(t)$ are both equally good choices to construct $F_{u}$.(C) - predicted and observed ROC curves.}
\label{fig7}
\end{figure}

To show how the predicted AUC score relates to edge reconstruction quality for different noise levels, in figure \ref{fig5} we plot reconstructions of the phantom along the $x_1$ axis for varying noise levels, $\sigma$. For example, when $\text{AUC} \approx 0.5$, the noise level is too great to see the edge and the probability of the edge being detected correctly reduces to $1/2$, i.e. that of a random classifier (the dashed line in Figure~\ref{fig:roc}). Conversely, when $\sigma\to0$ and, consequently, $\text{AUC} \to 1$, the edge is clearly visible.

We now consider the case when $u(t) = \chi_\rho(t)\text{sgn}(t)$ and $\sigma^2 = 3$ as before. See figure \ref{fig7}. The agreement between the predicted and observed PDFs is good, as in the last example, and the ROC curves match well. The predicted and observed AUC scores are both equal $\text{AUC} = 0.93$, which is only marginally higher than in the last example when $u(t) = \chi_\rho(t)t$. When $\alpha = 0.05$, the corresponding value of $1 - \beta$ is $1 - \beta = 0.67$, which is $3\%$ higher than in the last example. Thus, with all other variables fixed (e.g., $\sigma$), the kernels $u(t) = \chi_\rho(t)\text{sgn}(t)$ and $u(t) = \chi_\rho(t)t$ offer similar levels of statistical power.
\begin{figure}[!h]
\centering
\begin{subfigure}{0.24\textwidth}
\includegraphics[width=0.9\linewidth, height=3.2cm, keepaspectratio]{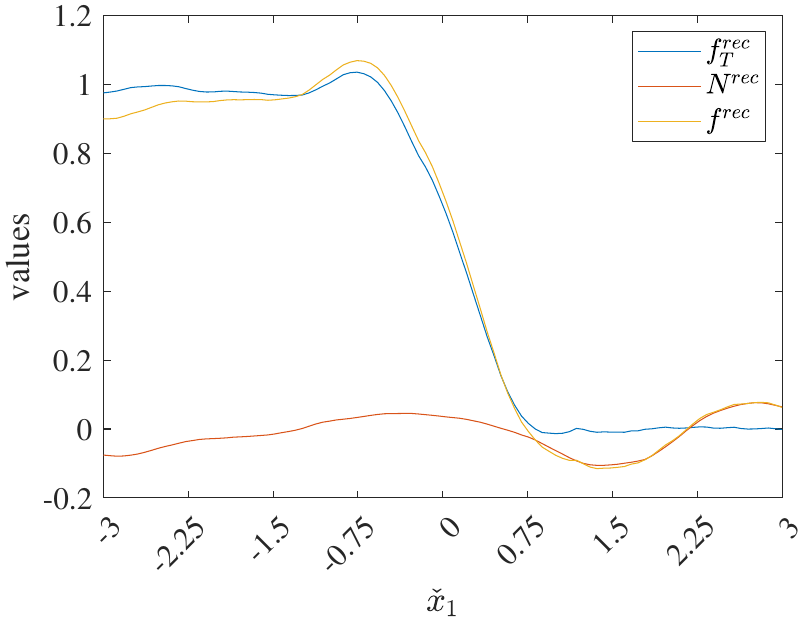}
\subcaption{$\text{AUC} = 0.99$, $\sigma = 0.87$} \label{be1a}
\end{subfigure}
\begin{subfigure}{0.24\textwidth}
\includegraphics[width=0.9\linewidth, height=3.2cm, keepaspectratio]{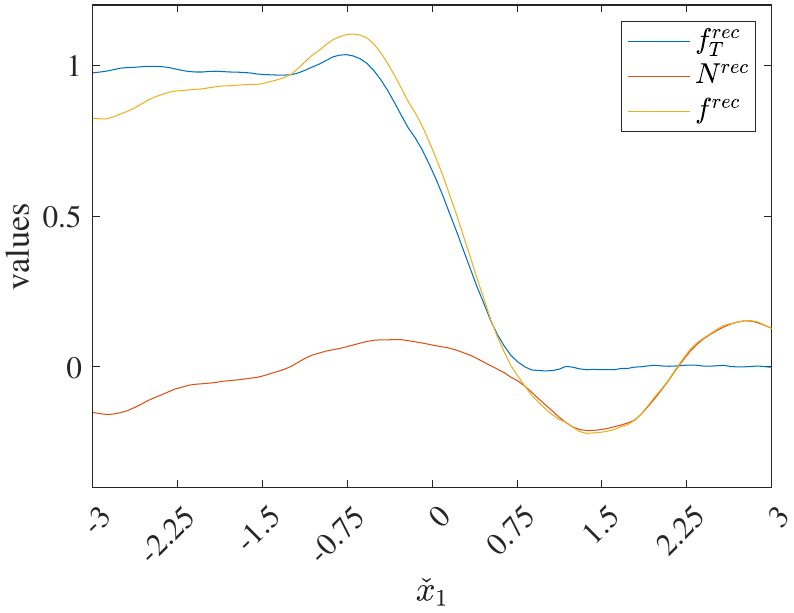}
\subcaption{$\text{AUC} = 0.93$, $\sigma = 1.73$} \label{bar1}
\end{subfigure}
\begin{subfigure}{0.24\textwidth}
\includegraphics[width=0.9\linewidth, height=3.2cm, keepaspectratio]{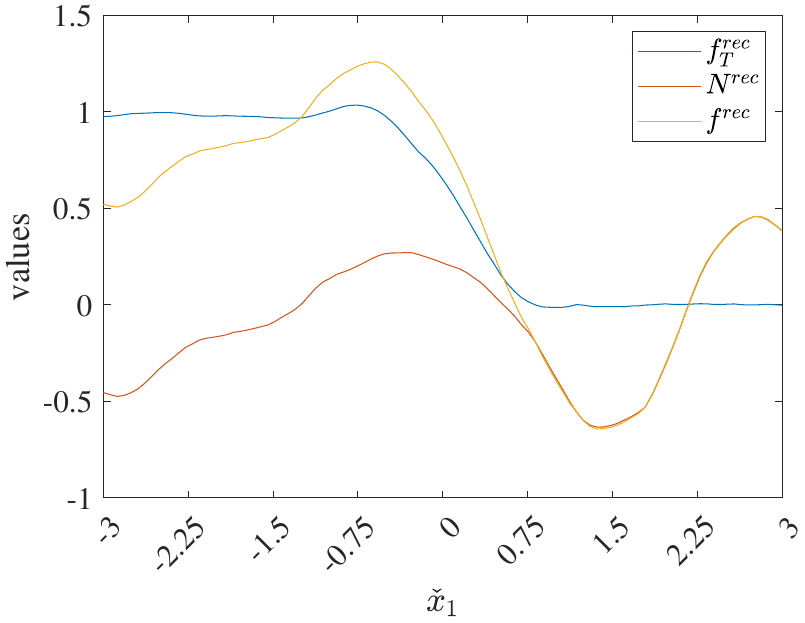}
\subcaption{$\text{AUC} = 0.69$, $\sigma = 5.2$}\label{ba1b}
\end{subfigure}
\begin{subfigure}{0.24\textwidth}
\includegraphics[width=0.9\linewidth, height=3.2cm, keepaspectratio]{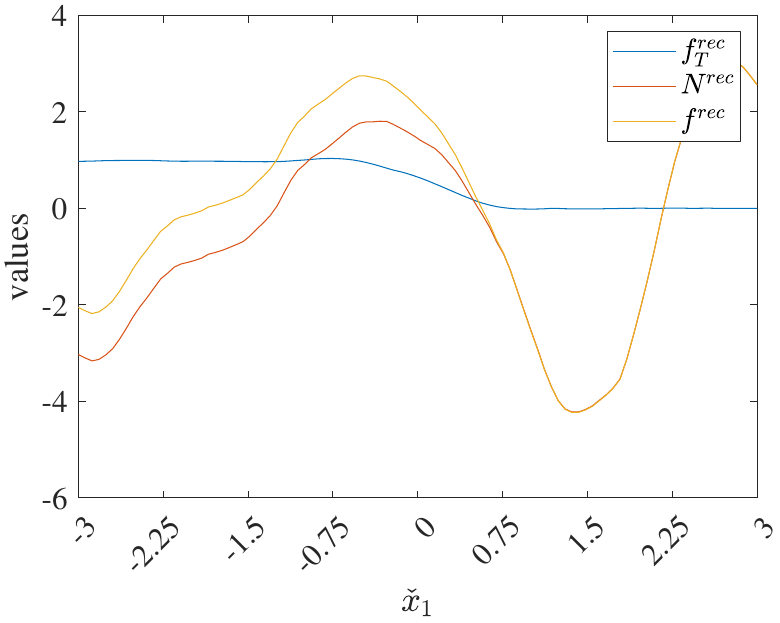}
\subcaption{$\text{AUC} = 0.53$, $\sigma = 34.6$} \label{ber1b}
\end{subfigure}
\caption{Local 1D line profiles of the reconstructions when $u(t) = \chi_\rho(t)\text{sgn}(t)$ at varying AUC levels. The blue curves show the noiseless reconstruction, the red curves are reconstructions just from noise, and the orange curves are the sum of both. Note that with increasing $\sigma$, the reconstruction from pure noise (red plot) dominates the noise-less reconstruction (blue plot) which is reflected
in the worsening AUC score as noise increases. }
\label{fig12_sgn}
\end{figure}

In figure \ref{fig5} we plot line profiles of the reconstruction on the full image scale. In figure \ref{fig12_sgn}, we plot the reconstruction values locally, i.e., along the red line profile shown in figure \ref{2a}, for varying noise levels and give the corresponding AUC values in the figure subcaptions.
We observe a similar effect as on the macro scale, except in this case, the reconstruction from purely noise is continuous as we are working in local neighborhoods, and $N^{\text{rec}}(\cvx)$ is a continuous GRF in rescaled coordinates.

\subsection{2D experiments}\label{ssec:2dexp}
Here, we conduct experiments to validate our 2D edge detection theory using the same setting as in the 1D experiments. We consider reconstruction of the disk in figure \ref{fig1}. To calculate $F=(F_1,F_2)$, we integrate locally over the domain $\vx_0+\e B_\rho(\bold 0)$, where $\vx_0=(R,0)$, $\rho = 3$ and $\e = 0.007$ as in the previous examples in 1D. We simulate noise $\eta \sim U(0,\sigma)$ from a uniform distribution with mean zero and standard deviation $\sigma$, and we set the jump size $\Delta f(\vx_0) = 1$. The deterministic part of the reconstruction on $D$ is shown in figure \ref{fig11} using the checked coordinates scaled to $\epsilon$ and centered on zero for better visualization. We see good agreement between our prediction and the observed reconstruction. As we are working locally, the boundary appears flat. In this section and in line with the theory of Section \ref{2D_edge}, we consider only linear weights, $u$.
\begin{figure}[!h]
\centering
\begin{subfigure}{0.3\textwidth}
\includegraphics[width=0.9\linewidth, height=3.2cm, keepaspectratio]{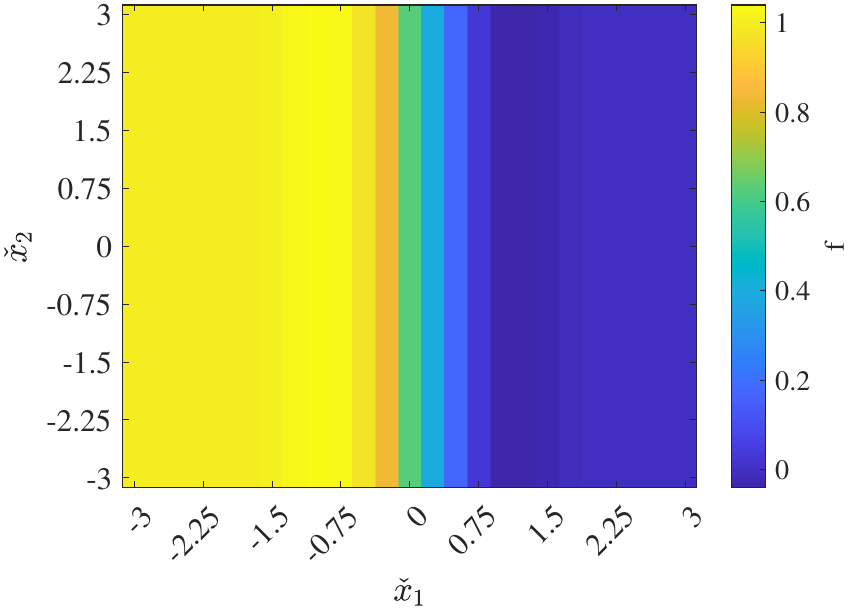}
\subcaption{prediction} \label{11a}
\end{subfigure}
\begin{subfigure}{0.3\textwidth}
\includegraphics[width=0.9\linewidth, height=3.2cm, keepaspectratio]{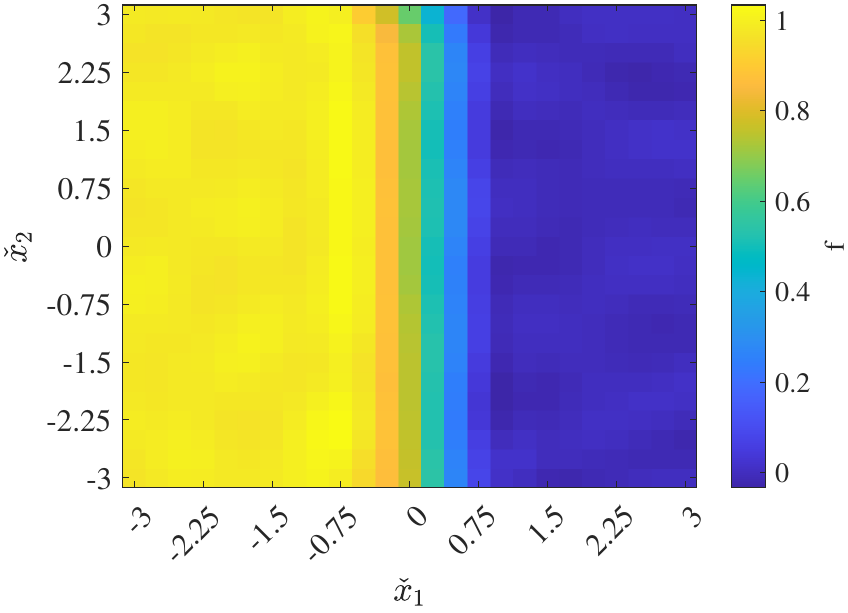}
\subcaption{observed} \label{11b}
\end{subfigure}
\caption{Deterministic part of the reconstruction. (a) - predicted local reconstruction of spherical phantom on $D:=[-\rho, \rho]^2$. (b) observed local reconstruction on $D$. They are the 2D analogs of the 1D line profiles through the reconstructions as presented in Figure~\ref{fig1}(b).}
\label{fig11}
\end{figure}
Using the image in figure \ref{11a}, we estimate the deterministic part of $F$, namely $H = (H_1,H_2) = (1.23, 0)$, which matches with the observed $H_o = (1.19,0)$ using the reconstruction in figure \ref{11b} to calculate the integrals.

We now focus on $G = (G_1,G_2)$, which is calculated using the random part of the reconstruction. For comparison, we set $\sigma^2 = 3$ as in the 1D examples. In this example, using \eqref{vec covar}, we predict that $G$ is normally distributed with mean zero and covariance $\hat C = \begin{pmatrix} 0.074 & 0\\ 0 & 0.074\end{pmatrix}$. Since $\sigma$ is a constant function, $\hat C=\nu^2\hat I_2$, $\nu^2=0.14$ (see Remark~\ref{rem:cov}). To test our prediction, we generate $10^4$ samples of $G$ by reconstructing from purely random noise draws on $D$ (using the noise distribution described above) and calculate the weighted integrals as in \eqref{5.1}. The observed $G$ is this case has mean $H_o = (-0.0018,-0.0026) \approx 0$ to two significant figures, and covariance $\hat C_o = \begin{pmatrix} 0.074 & 0\\ 0 & 0.075\end{pmatrix}$, again working to two significant figures. The histogram of the observed $G$ appears normal and matches well with the predicted PDF. See figure \ref{fig12}. The same is true for the shifted PDFs when there is an edge present. To calculate the shifted histograms, we generated $10^4$ further $G$ samples in the same way. In figure \ref{fig13}, we show the same plots as in figure \ref{fig12} but on the same grid (as Gaussian mixtures) to show better the separation between the shifted and null PDFs.

\begin{figure}[!h]
\centering
\begin{subfigure}{0.3\textwidth}
\includegraphics[width=0.9\linewidth, height=3.2cm, keepaspectratio]{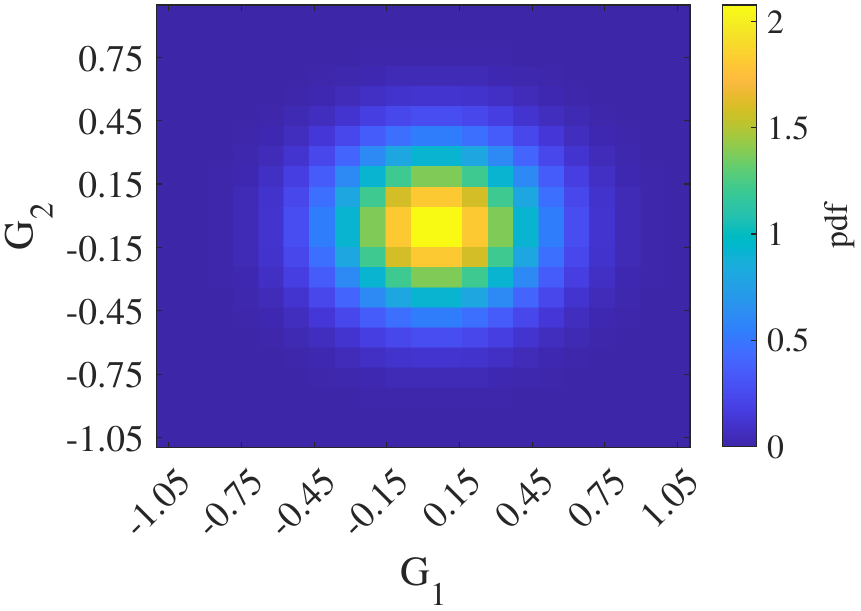}
\label{12a}
\end{subfigure}
\begin{subfigure}{0.3\textwidth}
\includegraphics[width=0.9\linewidth, height=3.2cm, keepaspectratio]{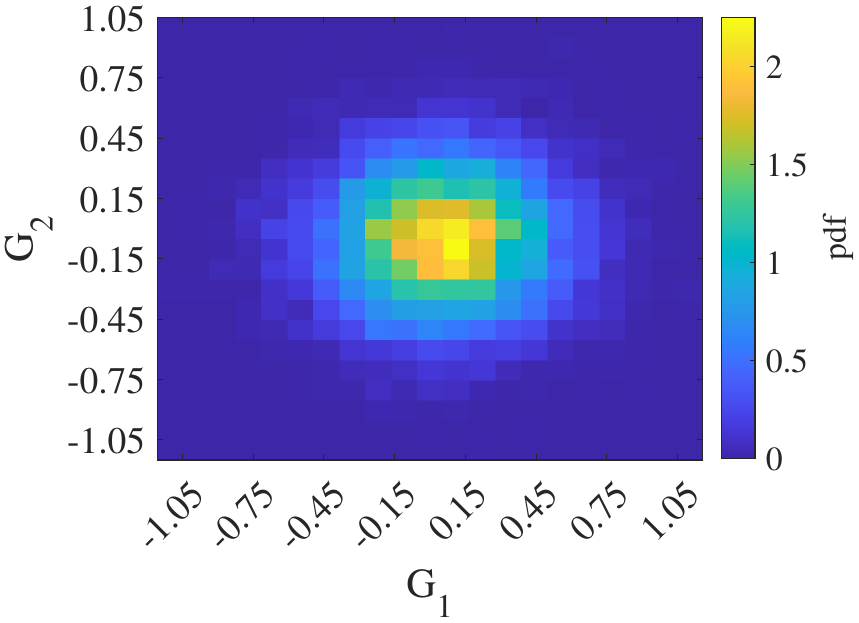}
\label{12b} 
\end{subfigure}
\begin{subfigure}{0.3\textwidth}
\includegraphics[width=0.9\linewidth, height=3.2cm, keepaspectratio]{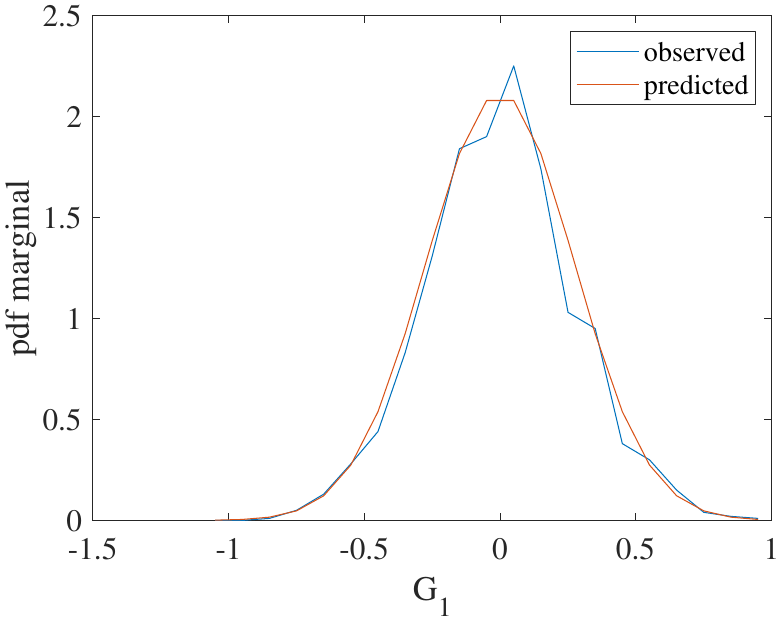}
\label{12b_1} 
\end{subfigure}
\\
\begin{subfigure}{0.3\textwidth}
\includegraphics[width=0.9\linewidth, height=3.2cm, keepaspectratio]{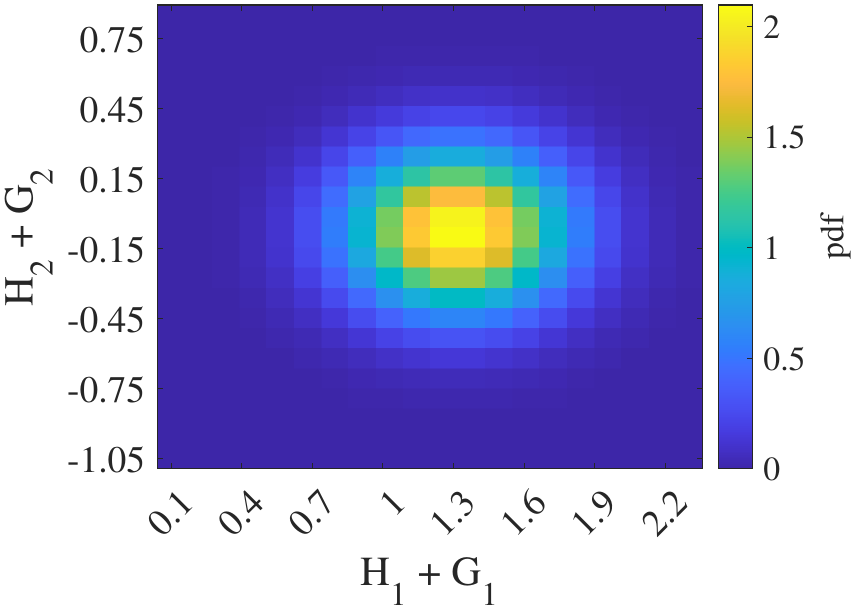}
\subcaption*{prediction} \label{12c}
\end{subfigure}
\begin{subfigure}{0.3\textwidth}
\includegraphics[width=0.9\linewidth, height=3.2cm, keepaspectratio]{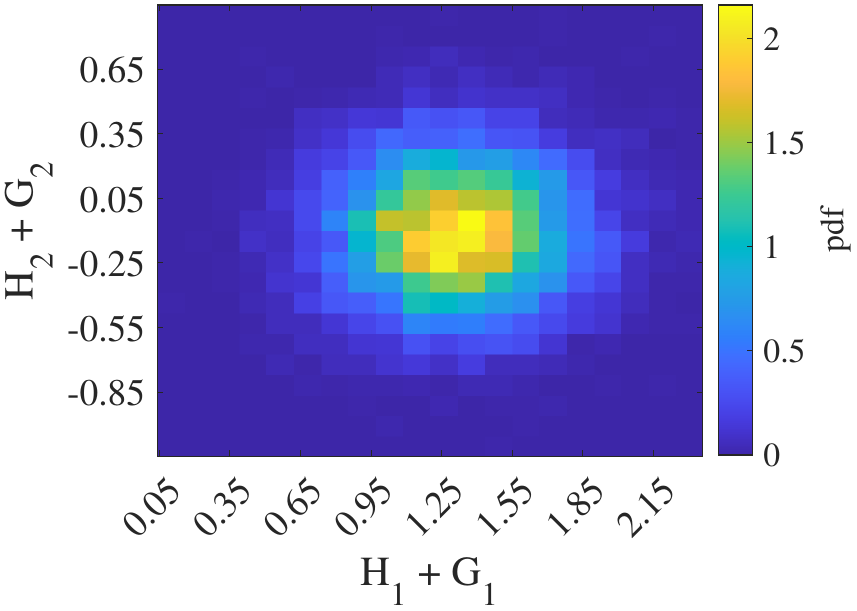}
\subcaption*{observed} \label{12d}
\end{subfigure}
\begin{subfigure}{0.3\textwidth}
\includegraphics[width=0.9\linewidth, height=3.2cm, keepaspectratio]{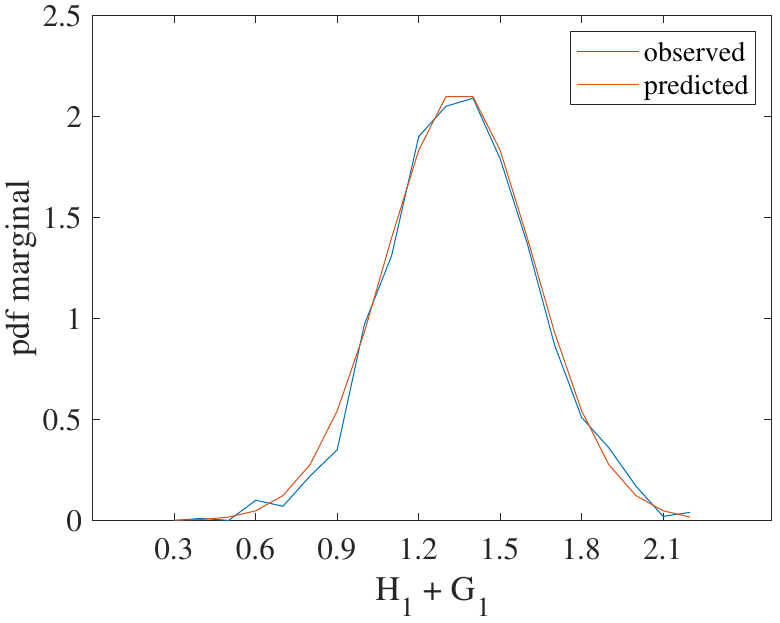}
\subcaption*{1D marginals} \label{12d_1} 
\end{subfigure}
\caption{Predicted and observed PDFs of $F$. Top row - null distributions ($F=G$), in this case there is no edge and $H=0$. Bottom row - shifted (edge) distributions ($F = G + H$). The right column shows 1D marginals of the PDFs (through the $F_1$ axis).}
\label{fig12}
\end{figure}

\begin{figure}
\centering
\begin{subfigure}{0.3\textwidth}
\includegraphics[width=0.9\linewidth, height=3.2cm, keepaspectratio]{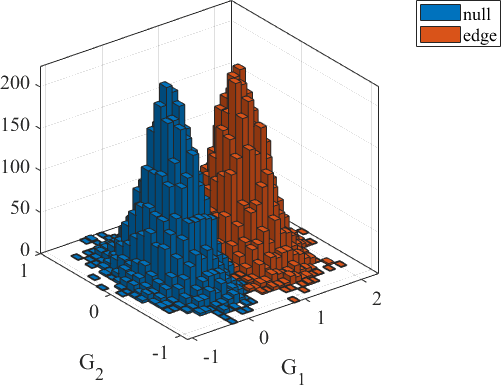}
\subcaption{histogram (observed)} \label{13a}
\end{subfigure}
\begin{subfigure}{0.3\textwidth}
\includegraphics[width=0.9\linewidth, height=3.2cm, keepaspectratio]{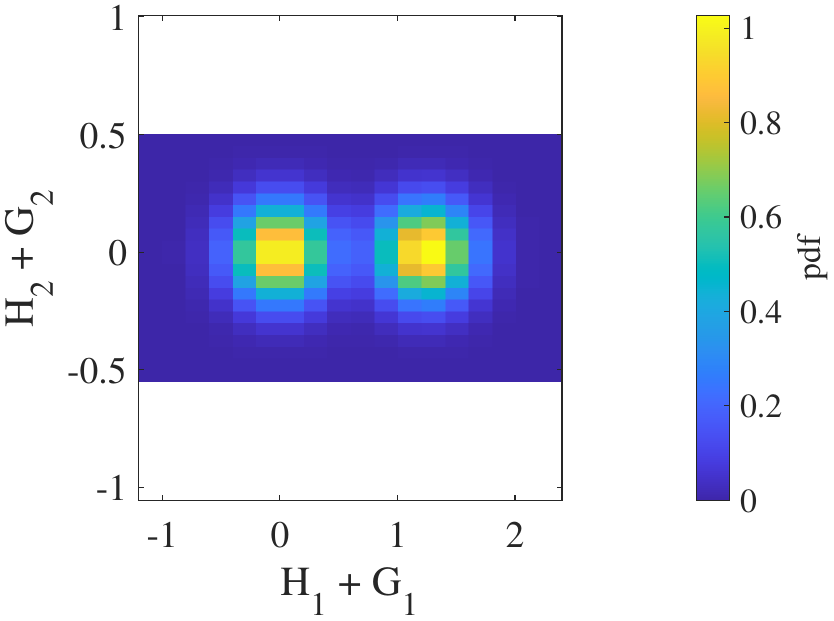}
\subcaption{prediction} \label{13b}
\end{subfigure}
\begin{subfigure}{0.3\textwidth}
\includegraphics[width=0.9\linewidth, height=3.2cm, keepaspectratio]{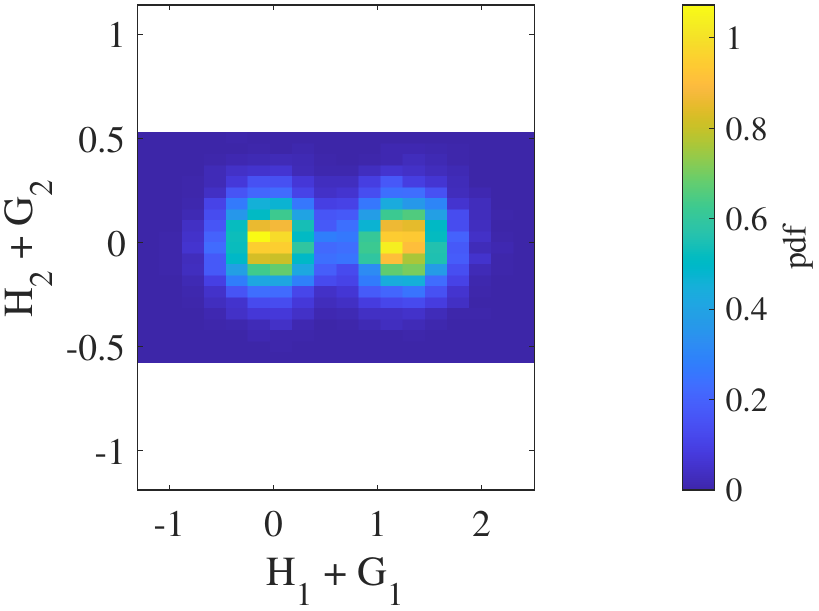}
\subcaption{observed} \label{13c}
\end{subfigure}
\caption{In (b) and (c) we show the PDF plots from figure \ref{fig12} side-by-side with a histogram of the observed $F$ values as a bar chart in (a). Observe that the null and shifted PDFs and histograms are well separated; thus illustrating that the edge will be detected with a high probability.}
\label{fig13}
\end{figure}

Let $\alpha$ denote the type I error of the test. By Theorem~\ref{thm4.1}, $100(1-\alpha)\%$ of the draws from a normal distribution with covariance $\nu^2 \hat I_2$ and mean $H$ lie in a ball $B_r(H)$ with radius $r = \nu\sqrt{ -2\log\alpha }$. We use this to estimate the power 
\begin{equation}\label{beta2d}
1-\beta = \frac{1}{2\pi \nu^2}\int_{\Vert\vx\Vert> r} e^{-\frac{1}{2\nu^2}\Vert\vx - H\Vert^2}\mathrm{d}\vx.
\end{equation}
Note that \eqref{beta2d} holds only when $\hat C=\nu^2\hat I_2$. Otherwise the general formula in Theorem~\ref{thm4.1} should be used to compute the power. The predicted value of the power $1-\beta = 0.99$ matches well with the observed power $1-\beta_o = 0.98$ calculated using the bottom right histogram in figure \ref{fig12}. 

We can also use this idea to illustrate the spread of $F$ around $H$, see figure \ref{fig14}. For example, setting $\al=0.05$, our results imply that when there is an edge present, the vector $F$ (the estimated edge) lies within $B_{c_\al}(H)$ (the interior of the orange circle), where $c_\al = \nu\sqrt{ -2\log0.05 }$, with $95\%$ probability. The percentage of the computed observations (the ‘x’s) which are in $B_{c_\al}(H)$ turned out to be $95\%$ to two significant figures, thus validating our conclusion. For the actual uncertainty quantification in an experiment, we use the same circle, but its center is at $F$ rather than at $H$.
\begin{figure}[!h]
\centering
\begin{subfigure}{0.4\textwidth}
\includegraphics[width=0.9\linewidth, height=5cm, keepaspectratio]{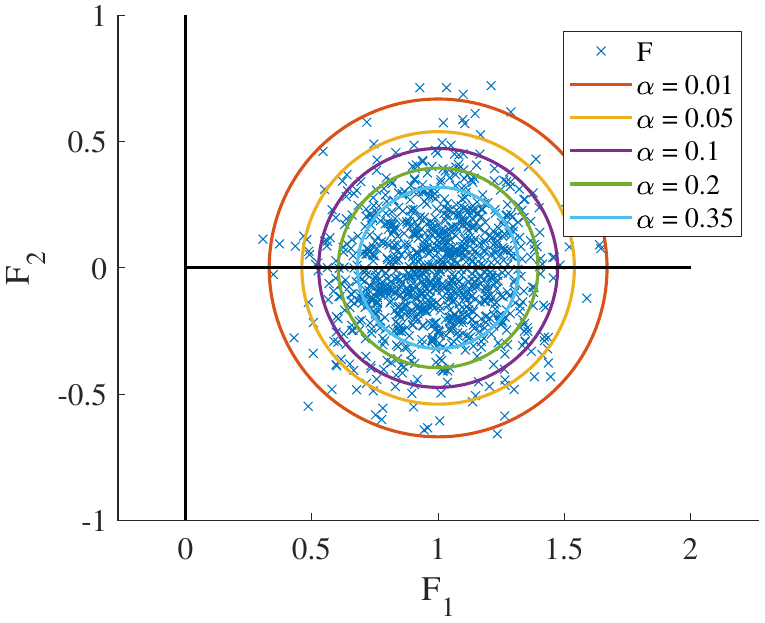}
\subcaption{CR plots} \label{14a}
\end{subfigure}
\begin{subfigure}{0.4\textwidth}
\includegraphics[width=0.9\linewidth, height=5cm, keepaspectratio]{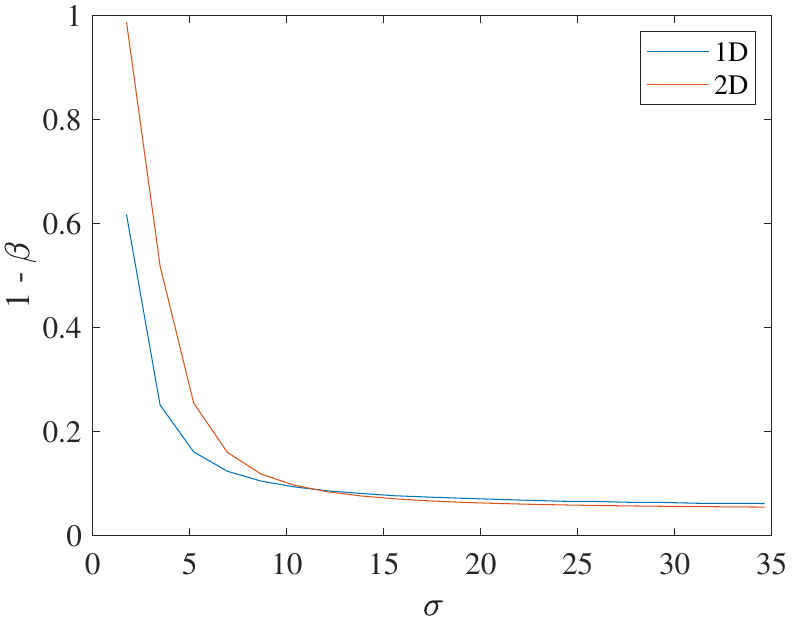}
\subcaption{Comparison of the $1 - \bt$ vs $\sigma$ curves in 1D and 2D} \label{14b}
\end{subfigure}
\caption{(a) - Computed observations, $F$ ($10^3$ samples), when there is an edge present. The individual observations are marked by ‘x’. The observations are scaled by $1/|H|$ to reflect the true jump magnitude, i.e., 1 in this case. We also show the level sets, $B_{c_\al}(H)$, of the predicted PDF for $F$ for a variety of $\alpha$. These sets illustrate the spread of $F$ around $H$.  (b) - plot of $1 - \beta$ vs $\sigma$ comparing the 1D (with $u$ linear) and 2D approaches.}
\label{fig14}
\end{figure}


Interestingly, the power and AUC of the 2D method, $1-\beta = 0.99$ and $\text{AUC}= 0.99$, are significantly higher than in the equivalent 1D example when $u$ is linear ($1 - \beta = 0.64$ and $\text{AUC} = 0.92$), and thus the 2D approach is preferred for the purposes of edge detection given also that the edge direction is not required. In figure \ref{14b}, we plot $\bt$ vs $\sigma$ keeping the jump size $\Delta f = 1$ and $\alpha = 0.05$ fixed and compare the 1D and 2D methods. The 2D approach offers greater or equal power to the 1D method across all $\sigma$, and when $\sigma$ is sufficiently large $(1 - \beta) \to \alpha$. The reason for the higher power using the 2D method is easy to understand. The edge response function $f_T(\chx)$ is constant along the edge, while the GRF oscillates. This gives the test statistic a better chance to reduce the noise in the reconstruction by averaging it, which has no effect on the deterministic part of the signal.

\subsection{Uncertainty in estimating the edge direction}\label{ssec:edm}

In addition to the uncertainty quantification of estimating the vector $H$, one can be interested to estimate the uncertainty in the direction, $H/|H|$, and magnitude, $|H|$, of the edge as separate quantities. Similarly to Section \ref{ssec:uq}, by inverting the appropriate statistical test it is possible to obtain separate confidence regions for the edge direction and magnitude. However, the corresponding test statistics are fairly complicated and using them to obtain the desired confidence regions would take us far away from the main topic of the paper. Therefore we instead illustrate the uncertainty in these quantities by plotting their PDFs.

Let $\Omega\subset S^1$ be a small subset of the unit sphere. Recall that $F\sim f(u;H,\hat C)$ (see \eqref{main pdf} and \eqref{hypo 2D}) and $\hat C=\nu^2 \hat I_2$. Therefore we replace $\hat C$ with $\nu$ in the arguments of $f$. Clearly,
\be
\Pb(F/|F|\in \Omega)=\int_\Omega \ioi f(t\vec\Theta;H,\nu)t\dd t\dd\theta,\ 
\vec\Theta=(\cos\theta,\sin\theta).
\ee
This implies that the PDF of the unit vector $F/|F|$, defined on $S^1$ and denoted $\varphi(\theta)$, is given by 
\be
\varphi(\theta)=\ioi f(t\vec\Theta;H,\nu)t\dd t,\ \vec\Theta = (\cos\theta,\sin\theta).
\ee
Clearly, the shape of the PDF depends on $|H|$ and $\nu$, but is independent of the orientation of $H$. We plot this PDF, $\varphi(\theta)$, in the form of a polar graph (a scattering diagram) in Figure~\ref{15a} for different values of $\sigma$. All the other parameters are fixed. Recall that $\nu$ depends on $\sigma$, see \eqref{Cov main}, \eqref{vec covar}, and remark~\ref{rem:cov}. The $\sigma = \sqrt{3} = 1.7$ curve, which corresponds to the worked example covered in Section~\ref{ssec:2dexp}, is weighted strongly towards $\vec\Theta_0 = (1,0)$, which is the true direction of the edge, and we are more confident that the edge has direction near $\vec\Theta_0$. As $\sigma$ increases, the $\varphi$ curve becomes more uniform about the origin, and, e.g., when $\sigma = 173$, $\varphi$ is nearly a circle (the edge is equally likely in all directions). 

Further, given some level $0<\alpha<1$, we can find $\omega$ such that 
\be
\phi(\omega) = \int_{|\theta-\theta_0|\le\omega} \varphi(\theta)\dd \theta = 1-\alpha,
\ee
where $\vec\Theta_0=H/|H|$. Then the estimated direction of the true edge would lie within $\pm\omega$ from $\theta_0$ with $100(1-\alpha)\%$ probability. See figure \ref{15b}, where we plot $\phi(\omega)$ for $\sigma=\sqrt3$. This plot illustrates how likely it is for the estimated edge direction $F/|F|$ to deviate from the true direction $H/|H|$ by no more than a given angle.
 
For example, setting $\al = 0.05$ gives $\omega = 26^{\circ}$. Thus the direction of the edge does not deviate by more than $26^{\circ}$ from $\vec\Theta_0$ with $95\%$ probability. Using the $10^4$ samples we generated to calculate the $H + G$ histogram in the middle bottom panel of figure \ref{fig12}, we verify that approximately $95\%$ have direction within $\pm20^{\circ}$ from $\theta_0$.


\begin{figure}[!h]
\centering
\begin{subfigure}{0.4\textwidth}
\includegraphics[width=0.9\linewidth, height=5cm, keepaspectratio]{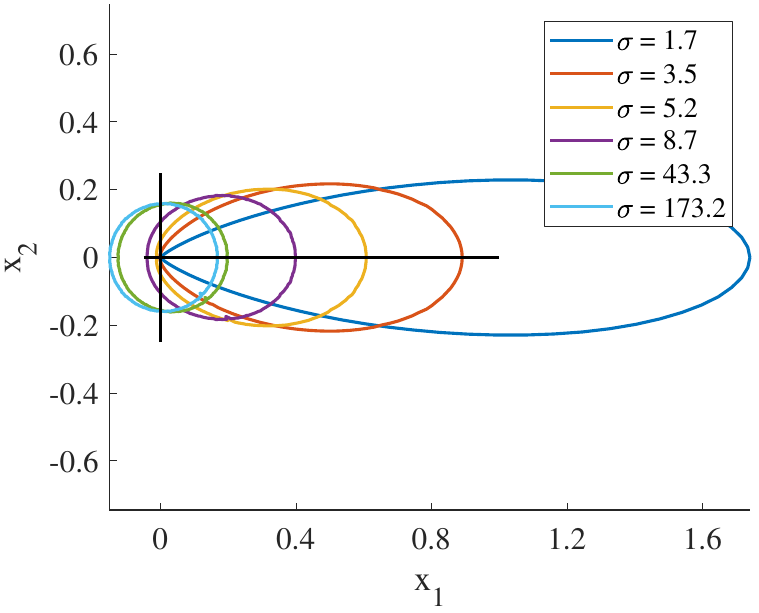}
\subcaption{$\varphi(\theta)$ polar plots} \label{15a}
\end{subfigure}
\begin{subfigure}{0.4\textwidth}
\includegraphics[width=0.9\linewidth, height=5cm, keepaspectratio]{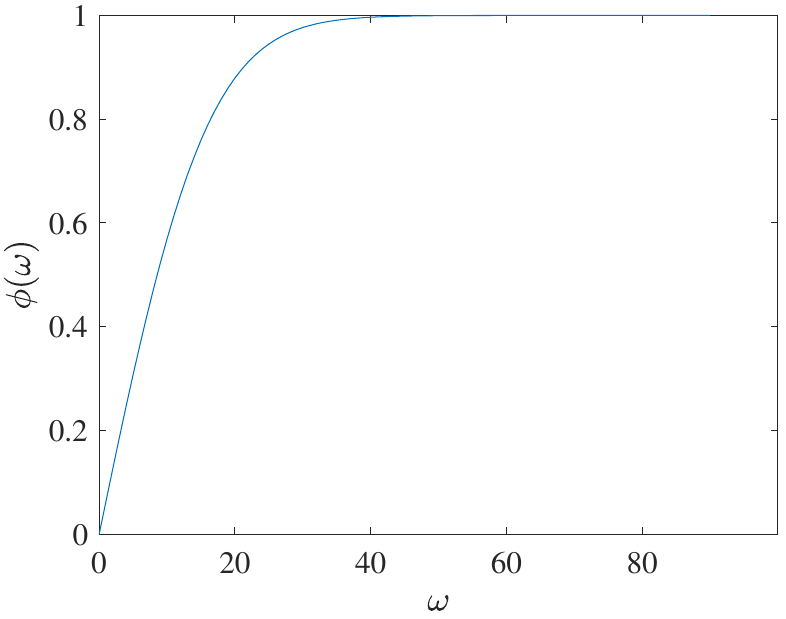}
\subcaption{$\phi(\omega)$} \label{15b}
\end{subfigure}
\caption{(a) Polar graphs of $\varphi(\theta)$ for varying $\sigma$ illustrating the probability of the edge occuring in a given direction. (b) Plot of $\phi(\omega)$ for $\sigma = \sqrt{3}$. }
\label{fig15}
\end{figure}

\subsection{Uncertainty in estimating the edge magnitude}\label{ssec:magn}
In a similar vein to the previous section, here we use our derived PDF for $F$ to quantify the uncertainty of the edge magnitude. It is clear that 
\be
\varphi(t) = t\int_0^{2\pi}f(t\vec\Theta)\dd\theta,\ t\ge0,
\ee
is the PDF for the estimated edge magnitude, $|F|$. We plot a set of $\varphi$ curves for varying noise levels $\sigma$ in figure \ref{16a}. As the noise level decreases, the PDF tends to a $\delta$ function centered on 1, the true edge magnitude. Conversely, when the noise increases, the Gaussian has greater standard deviation and there is less certainty of the edge magnitude.

Furthermore, $\phi(r)=\int_{|t-t_0|\le r}\varphi(t)\dd t$, $r<t_0$, is the probability that the estimated magnitude does not deviate by more than $r$ from the true magnitude $t_0=|H|$. For any $0<\al<1$, we can also calculate the $r$ such that $\phi(r) = 1-\al$ to derive an interval centered at $|H|$ that contains $|F|$ with $100(1-\al)\%$ probability. For example, $\phi(0.43) = 0.95$ meaning that $0.57 < |F| < 1.43$ with $95\%$ probability. See figure \ref{16b} for a plot of $\phi(r)$ for $\sigma=\sqrt 3$. 

\begin{figure}[!h]
\centering
\begin{subfigure}{0.4\textwidth}
\includegraphics[width=0.9\linewidth, height=5cm, keepaspectratio]{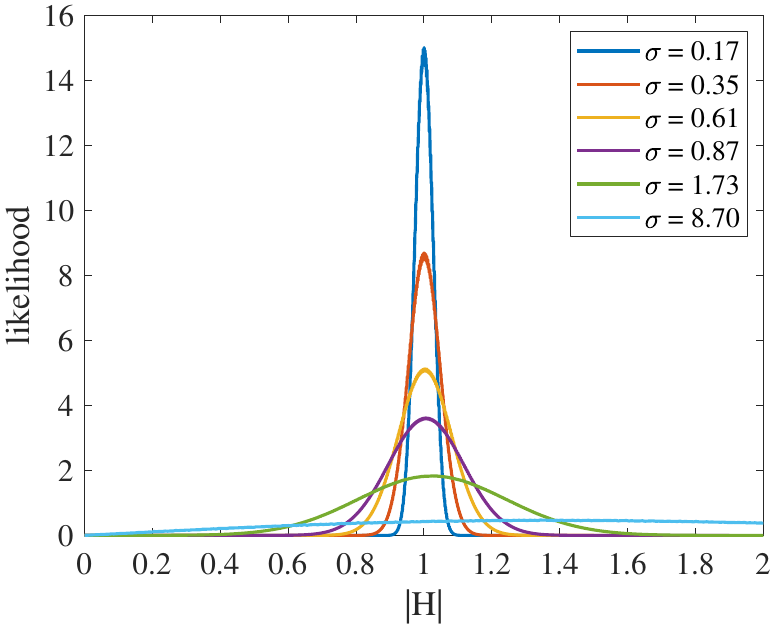}
\subcaption{$\varphi(t)$ plots (PDFs for $|H|$)} \label{16a}
\end{subfigure}
\begin{subfigure}{0.4\textwidth}
\includegraphics[width=0.9\linewidth, height=5cm, keepaspectratio]{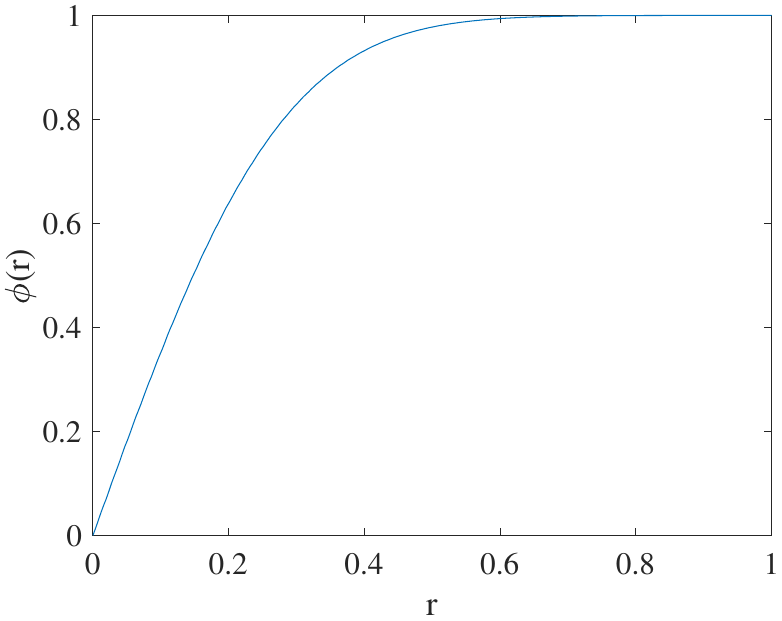}
\subcaption{$\phi(r)$} \label{16b}
\end{subfigure}
\caption{(a) Polar graphs of $\varphi(t)$, the PDF for $F$, for varying $\sigma$. (b) Plot of $\phi(r)$.}
\label{fig16}
\end{figure}

\subsection{Edge detection on the macro scale}\label{ssec:macro}
In the previous examples, we quantified in terms of $\beta$ the probability that an edge is visible microlocally in the reconstruction given $\e$, $\sigma^2$ and $\Delta f$. Throughout this section thus far, $\vx_0$ has remained fixed and we generated a set of reconstructions with different random noise draws to estimate $F$. In this section, we illustrate how we can employ $F$ to detect edges on the full image (macro) scale. In this example, a globally reconstructed image is fixed, and we slide the window $\vx_0+\e B_\rho(\bold 0)$ across the image by varying $\vx_0$ to estimate the likelihood of an edge occuring at $\vx_0$. We show the result of this in figure \ref{fig17} with $\sigma = 4 \sqrt{3}$. This equates to the ratio of the $L^2$ norms of the noise and the nonrandom signal in the CT data being $15\%$ (i.e., NSR$=15\%$). All other parameters, e.g., $\e$ are kept the same as before. We see in figure \ref{17b} that $|F|$ highlights the true edges well and the edge map can be recovered accurately as in figure \ref{17c} using a simple threshold. We show $|F|$ with no noise in figure \ref{17bb} to confirm that the sensitivity of the test is independent of the edge direction (cf. \eqref{H no noise}). 

The directions of the edges estimated from noisy data are represented in figure \ref{17d} using the angle $\theta_F$, where the direction of the edge is given by $(\cos\theta_F,\sin\theta_F)$. We only show the $\theta_F$ values at the edge locations calculated in figure \ref{17b} and set all the other values to zero. Additionally, we highlight some of the estimated edge directions $F/|F|$ by the white arrows in figure~\ref{17c}. This is an approximation to the classical wavefront set elements of a disk.

In the macro setting, the hypothesis testing theory of Section~\ref{ssec:2dexp} no longer provides the statistical guarantee stated in \eqref{level} as the neighboring windows are not independent. This is the common challenge of all scan statistics. The proper application of a pointwise hypothesis test in a scanning regime is a separate direction of research in statistics \cite{gk24}. Working out this issue is well beyond the scope of this work. We show the results of figure \ref{fig17} as an initial test of how our method performs for full-scale image edge detection, and the results appear promising. 

Additionally, we woud like to underscore that to apply our method, we require the reconstructed image (or, at least, the region of interest inside the image) to be computed on a grid with step size, which is a fraction of $\e$. This is necessary in order to calculate the integrals like those in \eqref{5.1}. We believe that reconstructing an image with grid step size $\gtrsim\e$, as is currently the common practice, may lead to some information loss. In future work, we aim to investigate further whether such small step size methods can lead to better edge detection performance or bring about other benefits. 

The preceding comment applies if Assumption~\ref{noi} holds. If \eqref{flex moments} holds with some $\vartheta$ such that $\vartheta(\e)\to\infty$ as $\e\to0$, then it is acceptable to reconstruct $f^{\text{rec}}_{\e,\eta}(\vx)$ on a grid with the original step size $\e$, because $\e\ll\e^\prime$, where $\e^\prime$ is the appropriate resolution for the given noise strength (see the discussion at the end of Section~\ref{sec:setting_mainres}).

\begin{figure}
\centering
\begin{subfigure}{0.24\textwidth}
\includegraphics[width=0.9\linewidth, height=3.2cm, keepaspectratio]{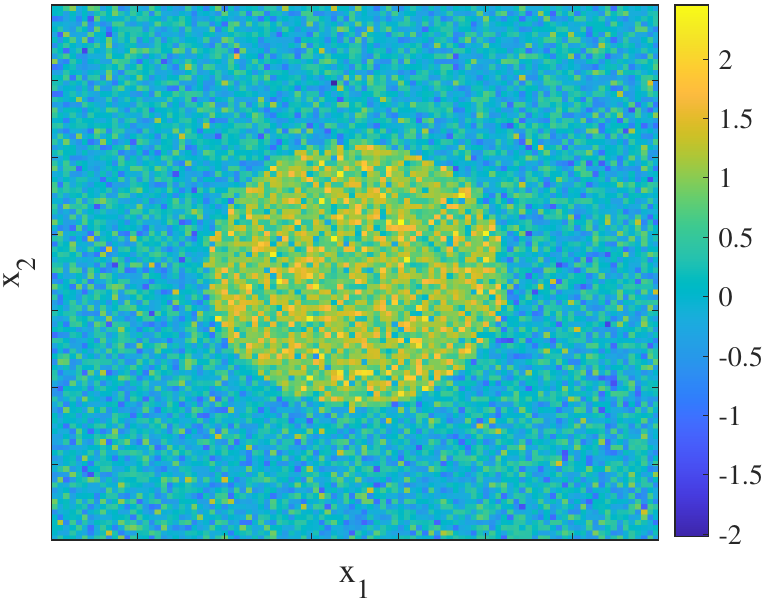}
\subcaption{reconstruction} \label{17a}
\end{subfigure}
\begin{subfigure}{0.24\textwidth}
\includegraphics[width=0.9\linewidth, height=3.2cm, keepaspectratio]{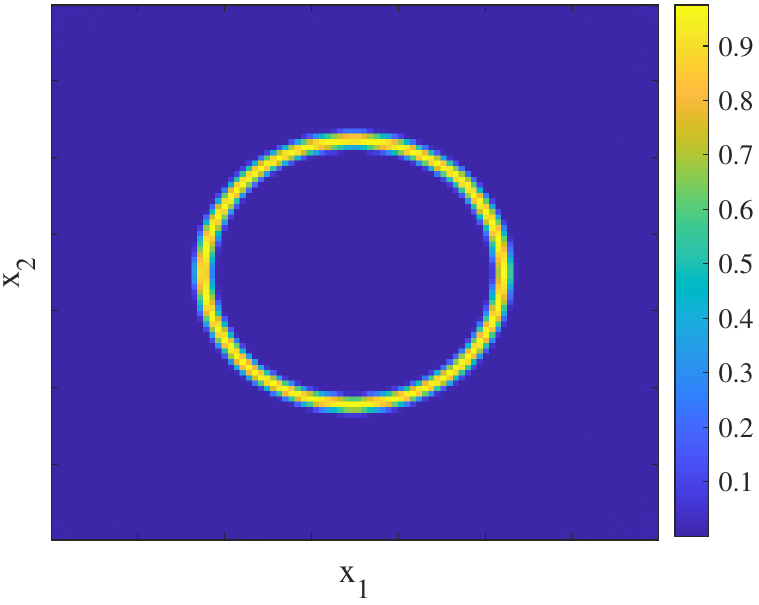}
\subcaption{$|F|$ (no noise)} \label{17bb}
\end{subfigure}
\begin{subfigure}{0.24\textwidth}
\includegraphics[width=0.9\linewidth, height=3.2cm, keepaspectratio]{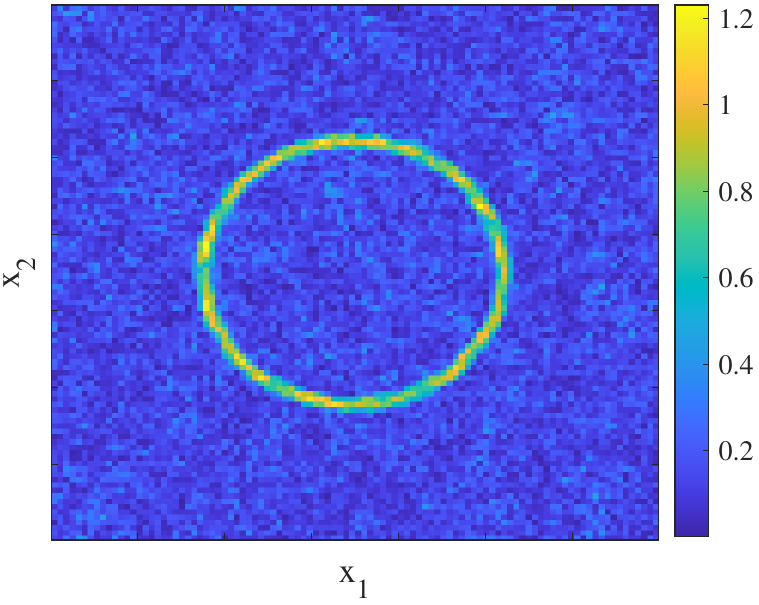}
\subcaption{$|F|$ (with noise)} \label{17b}
\end{subfigure}
\\
\begin{subfigure}{0.24\textwidth}
\includegraphics[width=0.9\linewidth, height=3.2cm, keepaspectratio]{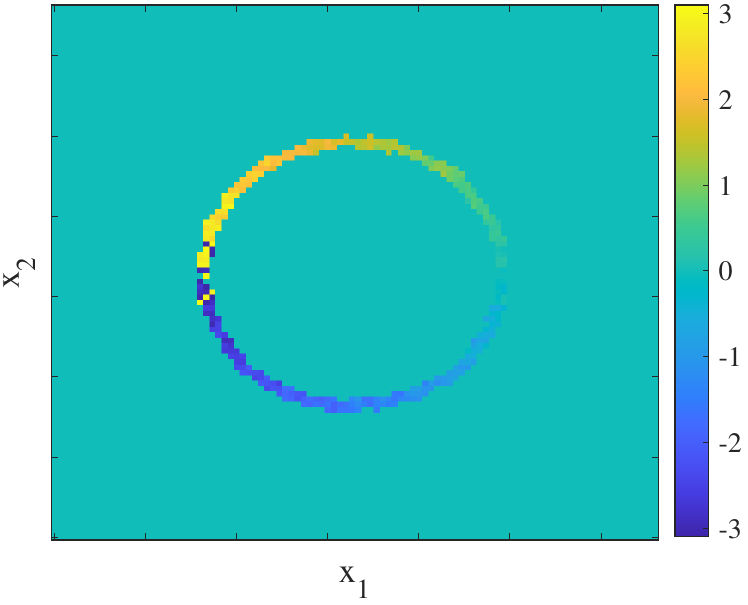}
\subcaption{$\theta_F$} \label{17d}
\end{subfigure}
\begin{subfigure}{0.24\textwidth}
\includegraphics[width=0.9\linewidth, height=3.2cm, keepaspectratio]{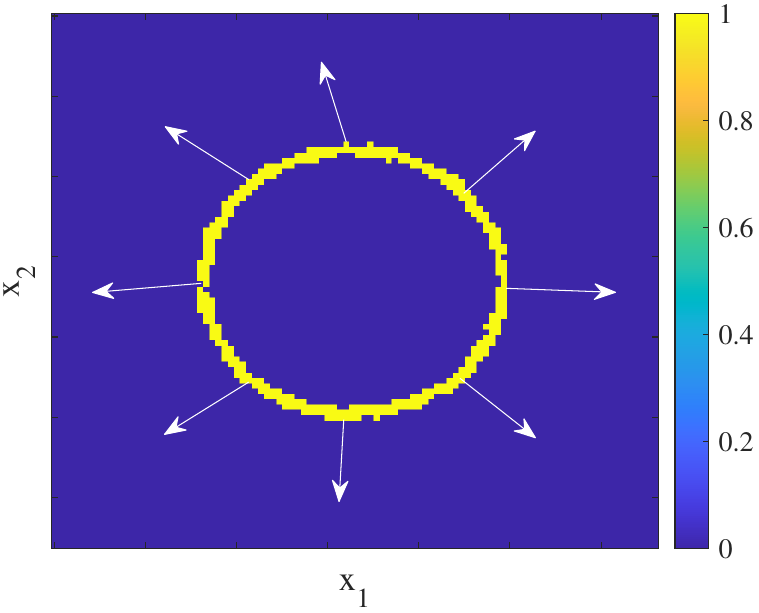}
\subcaption{$\text{WF}(f)$ approximation} \label{17c}
\end{subfigure}
\caption{(a) Noisy reconstruction of image phantom on full image scale. (b) and (c) Map of $|F|$ without and with noise, respectively. (d) Map of $\theta_F$, where $F = |F|(\cos\theta_F, \sin\theta_F)$, but set to zero when there is no edge. (e) extracted edges with directions. }
\label{fig17}
\end{figure}

\section{Appendix}

\begin{proposition}\label{prop2}
Let $D\subset\br^n$ be a bounded domain. Let $u(\chx)$ be any $L^1(D)$ deterministic function and $N^{\text{rec}}(\chx)$ be the GRF (a random function) constructed in \cite{AKW2024_1}, see also Theorem~\ref{GRF_thm}. Consider the random variable
\begin{align}\label{grv}
U=\int\limits_{D}u(\chx) N^{\text{rec}}(\chx) \dd \chx.
\end{align}
Then, $U$ is a Gaussian random variable with mean $0$ and variance $\gamma^2=\int\limits_{\mathbb{R}^{2n}}u(\chx)u(\chy)C(\chx-\chy)\dd\chx\dd\chy$.
\end{proposition}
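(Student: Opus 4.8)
The plan is to exhibit $U$ as an $L^2(\Omega)$-limit of finite linear combinations of the field values $N^{\text{rec}}(\chx_i)$ and then to use the standard fact that the closed linear span (in $L^2(\Omega)$) of a jointly Gaussian family consists of Gaussian random variables; indeed, if centered Gaussians $X_n$ converge in $L^2(\Omega)$ to $X$, then $\Eb X_n^2\to\Eb X^2$ and, passing to characteristic functions, $X$ is centered Gaussian. Two uniform bounds drive the argument. First, by assumption~\ref{interp} the derivative $\varphi'$ is bounded with compact support, so $\varphi'\star\varphi'$ is bounded, and since $\sigma$ is continuous on the compact set $[-\pi,\pi]\times[-P,P]$, the covariance $C$ in \eqref{Cov main} is bounded: $\|C\|_\infty<\infty$. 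Second, by Theorem~\ref{GRF_thm} the sample paths of $N^{\text{rec}}$ are a.s.\ continuous, hence $M:=\sup_{\chx\in\overline D}|N^{\text{rec}}(\chx)|<\infty$ almost surely on the compact closure $\overline D$. Together these give $|U|\le M\|u\|_{L^1(D)}<\infty$ a.s., so the integral \eqref{grv} is a.s.\ absolutely convergent and $U$ is a well-defined random variable.

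I would first treat a continuous, compactly supported integrand $v\in C_c(D)$. Then $v\,N^{\text{rec}}$ has a.s.\ continuous paths of compact support and is Riemann integrable, so $\int_D v\,N^{\text{rec}}\dd\chx$ is the a.s.\ limit of Riemann sums $\sum_i v(\chx_i)N^{\text{rec}}(\chx_i)|Q_i|$; each such sum is a finite linear combination of the jointly Gaussian values $N^{\text{rec}}(\chx_i)$ and is therefore Gaussian. A second-moment estimate using $\|C\|_\infty$ shows these sums are also $L^2(\Omega)$-Cauchy, so the integral is an $L^2(\Omega)$-limit of Gaussians and hence Gaussian. For general $u\in L^1(D)$ I would pick $v_n\in C_c(D)$ with $v_n\to u$ in $L^1(D)$ and set $U_n:=\int_D v_n\,N^{\text{rec}}\dd\chx$. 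The inequality $\Eb(U_n-U_m)^2=\int_D\int_D(v_n-v_m)(\chx)(v_n-v_m)(\chy)C(\chx-\chy)\dd\chx\dd\chy\le\|C\|_\infty\|v_n-v_m\|_{L^1(D)}^2$ shows $\{U_n\}$ is $L^2(\Omega)$-Cauchy, while $|U_n-U|\le M\|v_n-u\|_{L^1(D)}\to0$ a.s.\ identifies the $L^2$-limit with $U$. Thus $U$ is Gaussian.

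Finally I would compute the two moments. The mean vanishes because $\Eb U=\int_D u(\chx)\,\Eb N^{\text{rec}}(\chx)\dd\chx=0$, the interchange of $\Eb$ and $\int$ being justified by the zero mean of the field together with the integrability bound just used. For the variance, Fubini's theorem gives $\Eb U^2=\int_D\int_D u(\chx)u(\chy)\,\Eb\big(N^{\text{rec}}(\chx)N^{\text{rec}}(\chy)\big)\dd\chx\dd\chy=\int_D\int_D u(\chx)u(\chy)C(\chx-\chy)\dd\chx\dd\chy=\gamma^2$, and its application is legitimate since $\int_D\int_D|u(\chx)u(\chy)C(\chx-\chy)|\dd\chx\dd\chy\le\|C\|_\infty\|u\|_{L^1(D)}^2<\infty$; equivalently, $\gamma_n^2:=\Eb U_n^2\to\gamma^2$ by the same bound and $\Eb U^2=\lim\Eb U_n^2$ by $L^2(\Omega)$ convergence. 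The main obstacle is not any single estimate but the regularity mismatch between the merely $L^1$ weight $u$ and the continuous random field $N^{\text{rec}}$: the whole scheme rests on the a.s.\ boundedness of the paths on $\overline D$ and on the uniform bound $\|C\|_\infty$, which are precisely what let the density argument and the Fubini interchange go through.
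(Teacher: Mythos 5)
Your proof is correct, and its backbone is the same as the paper's: realize $U$ as an $L^2(\Omega)$-limit of finite linear combinations of the jointly Gaussian values $N^{\text{rec}}(\chx_i)$, then invoke the standard fact that $L^2$-limits of centered Gaussians are centered Gaussian (the paper packages this as Lemma~\ref{limit}), and compute the two moments by Fubini. The approximation scheme differs, though. The paper handles $u\in L^1(D)$ in one step by using cell-averaged weights $\bar u_i=\int_{D_i}u(\chx)\dd\chx$ on a partition with $\mathrm{diam}(D_i)=O(n^{-1/2})$, and shows $\Eb(U-U_n)^2\to 0$ via the (uniform) continuity of $C$ — note that pointwise Riemann sums would be meaningless for an $L^1$ weight, which is exactly the obstruction your two-step argument circumvents: you first treat $v\in C_c(D)$ by pathwise Riemann sums, then pass to general $u$ by $L^1$-density using the clean estimate $\Eb(U_n-U_m)^2\le\|C\|_\infty\|v_n-v_m\|_{L^1(D)}^2$ and the a.s. bound $|U_n-U|\le M\|v_n-u\|_{L^1(D)}$ to identify the limit. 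Your route buys something: the density step needs only boundedness of $C$ and a.s. path boundedness, whereas the paper's one-step estimate implicitly relies on a modulus of continuity for $C$ (its stated $O(n^{-1/2})$ rate actually presumes $C$ Lipschitz, which does hold here since $\varphi''\in L^\infty$ makes $\varphi'\star\varphi'$ Lipschitz). One small imprecision to fix: in your first step, $\|C\|_\infty$ alone does not make the Riemann sums $L^2$-Cauchy; you need that their second moments are themselves Riemann sums of the \emph{continuous} function $v(\chx)v(\chy)C(\chx-\chy)$, so continuity of $C$ (which the paper also uses) must be invoked at that point. With that wording corrected, the argument is complete.
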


\begin{proof}
Let $C:=C(D,\br)$ be the collection of all continuous functions $f:D\to\br$ metrized by
\be\label{Cmetric}
d(f,g)=\sup_{\chx\in D}|f(\chx)-g(\chx)|,\ f,g\in C.
\ee
It is shown in \cite{AKW2024_1} that $N^{\text{rec}}(\chx)$ is a $C$-valued random variable on an abstract probability space $\{\Omega, B(\Omega), P\}$ (see \cite[Chapter 6, Section 1.1]{khos_book_02}). In particular, for any given $\omega\in\Omega$, $N^{\text{rec}}(\chx,\omega):D\to\br$ is a continuous function. 
Hence $u(\chx)$ gives rise to the random variable $U$:
\begin{align}
U &:\Omega \to \Rb,\ U(\omega):= \int\limits_{D} u(\chx) N^{\text{rec}}(\chx,\omega) \dd \chx.
\end{align}
Indeed, since $u(\chx)\in L^1(D)$, the map $C\to\mathcal B(\br)$ given by $f\to\int_D f(\chx)u(\chx)\dd\chx$ is clearly continuous. Here $\mathcal B(\br)$ is $\br$ endowed with the standard topology. This map is therefore measurable, and \cite[Chapter 6, Theorem 1.1.1]{khos_book_02} implies that $U(\omega)$ is a measurable function on $\Omega$, i.e. $U$ is a random variable.

Consider a sequence of partitions of $D=\cup_{i=1}^n D_i$ such that $D_i\cap D_j=\varnothing$, $i\neq j$, and $\text{diam}(D_i)=O(n^{-1/2})$. The dependence of $D_i$ on $n$ is omitted. Introduce a sequence of random variables:
\begin{align}\label{2dGauss}
U_{n}= \sum\limits_{i=1}^{n} \bar u_iN^{\text{rec}}(\chx_i;\omega),\ \bar u_i:=\int_{D_i}u(\chx)\dd \chx.
\end{align}  
where $\chx_i\in D_i$ is an arbitrary point. 

Clearly, $U_n\sim \mathcal N(0,\gamma_n^2)$, where $\gamma_n^2=\sum_{i,j=1}^n \bar u_i \bar u_j C(\chx_i-\chx_j)$ and $C(\chx)$ is defined in \eqref{Cov main}. It remains to show $\Eb(U -U_{n})^2\to 0$. We have
\be\label{conv stp 1}\bs
\Eb(U-U_n)^2=&\Eb\bigg(\sum_{i=1}^{n} \int_{D_i}u(\chx)\big(N(\chx;\omega)-N^{\text{rec}}(\chx_i;\omega)\big)\dd \chx\bigg)^2\\
=&\sum_{i,j=1}^{n} \int_{D_i}\int_{D_j}u(\chx)u(\chy)\Eb\big(N(\chx;\omega)-N^{\text{rec}}(\chx_i;\omega)\big)\big(N(\chy;\omega)-N^{\text{rec}}(\chx_j;\omega)\big)\dd \chx\dd \chy\\
=&\sum_{i,j=1}^{n} \int_{D_i}\int_{D_j}u(\chx)u(\chy)\big(C(\chx,\chy)-C(\chx,\chx_j)-C(\chx_i,\chy)+C(\chx_i,\chx_j)\big)\dd \chx\dd \chy.
\end{split}
\ee
By construction, $|\chy-\chx_j|=O(n^{-1/2})$. By the continuity of $C$, we get from \eqref{conv stp 1}:
\be\label{conv stp 2}\bs
\Eb(U-U_n)^2\le &O(n^{-1/2})\sum_{i,j=1}^{n} \int_{D_i}\int_{D_j}|u(\chx)||u(\chy)|\dd \chx\dd \chy\\
=&O(n^{-1/2})\bigg[\int_D |u(\chx)|\dd \chx\bigg]^2\to0,\ n\to\infty.
\end{split}
\ee

This shows that $U_{n}\to U$ in $L^2$. Lemma~\ref{limit} stated below implies $U\sim\mathcal N(0,\gamma^2)$, where $\gamma^2=\lim_{n\to\infty} \gamma_n^2$. See \cite[Page 17]{rozanov_book} for a similar proof. 

Finally, substituting $u(\chy)=y_j$ gives the result for $G_j$, $j=1,2$. 

\begin{lemma}[Example 3.1.12 and Lemma 3.1.9 of \cite{sokol2013advanced}]
\label{limit}
Let $(\xi_n)$ and $(\gamma_n^2)$ be real sequences with limits $\xi$ and $\gamma^2$, respectively. Then the distribution $\mathcal N(\xi_n,\gamma_n^2)$ converges weakly to $\mathcal N(\xi,\gamma^2)$ as $n\to \infty$.
\end{lemma}
\end{proof}

\section*{Acknowledgments}
JWW wishes to acknowledge funding support from The Cleveland Clinic Foundation, The Honorable Tina Brozman Foundation, the V Foundation, and the National Cancer Institute R03CA283252-01. AK is thankful to Prof. Eugene Katsevich, Statistics Department, University of Pennsylvania, for helpful discussions about confidence regions.

\bibliographystyle{abbrv}
\bibliography{refs, My_Collection}
\end{document}